\theoremstyle{plain}
\newtheorem{thm}{Theorem}
\newtheorem{proposition}[thm]{Proposition}
\newtheorem{lemma}[thm]{Lemma}
\newtheorem{corollary}[thm]{Corollary}
\newtheorem{mydef}[thm]{Definition}
\newtheorem{remark}[thm]{Remark}
\newcommand{\paren}[1]{\left(#1\right)}
\newcommand{\mymap}[3]{#1:\,#2 \to #3\,}
\newcommand{\mmap}[3]{#1:\,#2\rightrightarrows #3\,}
\DeclareMathOperator{\argmin}{argmin\,}
\DeclareMathOperator{\prox}{prox}
\DeclareMathOperator{\Id}{Id}
\DeclareMathOperator{\Fix}{Fix}
\DeclareMathOperator{\dist}{dist}
\newcommand{\Nbb}{\mathbb{N}}
\newcommand{\Rbb}{\mathbb{R}}
\newcommand{\Ball}{\mathbb{B}}
\newcommand{\Tbar}{\overline{T}}
\newcommand{\Tcal}{\mathcal{T}}
\newcommand{\xbar}{\overline{x}}
\newcommand{\tbar}{\overline{t}}
\newcommand{\ybar}{{\overline{y}}}
\newcommand{\alphabar}{\overline{\alpha}}
\newcommand{\alphahat}{\widehat{\alpha}}
\newcommand{\epsilonhat}{\widehat{\epsilon}}
\newcommand{\epsilonbar}{{\overline{\epsilon}}}
\newcommand{\kappabar}{\overline{\kappa}}
\newcommand{\gammabar}{\overline{\gamma}}
\renewcommand{\equiv}{:=}
\title{Convergence of Proximal Splitting Algorithms in CAT($\kappa$) Spaces and Beyond}
\author{
{Florian Lauster}
 \thanks{Institute for Numerical and Applied Mathematics, 
University of G\"ottingen. FL was supported by 
the Deutsche Forschungsgemeinschaft (DFG, German Research Foundation) – 
Project-ID LU 1702/1-1.  
\texttt{f.lauster@math.uni-goettingen.de}}, 
\and
{D. Russell Luke} 
\thanks{Institute for Numerical and Applied Mathematics,
    University of Goettingen,
    37083 Goettingen, Germany. DRL was supported in part by 
    the Deutsche Forschungsgemeinschaft (DFG, German Research Foundation) – Project-ID 432680300 – SFB 1456.
	\texttt{r.luke@math.uni-goettingen.de}}
}
\date{\today}
\begin{document}
 \maketitle

 \begin{abstract}
In the setting of CAT($\kappa$) spaces, common fixed point iterations built from prox mappings   
(e.g. prox-prox, Krasnoselsky-Mann relaxations, nonlinear projected-gradients) 
converge locally linearly under the assumption of linear metric subregularity.  Linear metric
subregularity is in any case necessary for linearly convergent fixed point sequences, 
so the result is tight.  To show this, we develop a theory of fixed point mappings that violate 
the usual assumptions of nonexpansiveness and firm nonexpansiveness in $p$-uniformly 
convex spaces.  
\end{abstract}

{\small \noindent {\bfseries 2010 Mathematics Subject Classification:}
  Primary 
  47H09, 
 47H10, 
 53C22,  
 53C21   

 Secondary 
 53C23,  
 53C20,  
 49M27
  }

\noindent {\bfseries Keywords:}
Averaged mappings, p-uniformly convex, 
CAT(k) space, nonexpansive mappings, firmly nonexpansive, fixed point iteration, proximal point algorithm

\section{Fundamentals of Nonlinear Spaces}\label{s:1}
  
Following \cite{RuiLopNic15}
we focus on {\em $p$-uniformly convex spaces with parameter $c$} \cite{NaoSil11}:  
for $p\in (1,\infty)$,  a metric space $(G, d)$ is $p$-uniformly convex with constant 
$c>0$  whenever it is a geodesic space, and 
\begin{equation}\label{e:p-ucvx}
(\forall t\in [0,1])(\forall x,y,z\in G) \quad
d(z, (1-t)x\oplus ty)^p\leq (1-t)d(z,x)^p+td(z,y)^p - \tfrac{c}{2}t(1-t)d(x,y)^p.
\end{equation}
Examples of $p$-uniformly convex spaces are $L^p$ spaces, CAT(0) spaces ($p=c=2$), 
Hadamard spaces (complete CAT$(0)$ spaces), Hilbert spaces (linear Hadamard spaces).  
Of particular interest are CAT($\kappa$) spaces since these serve as the model space for 
applications on manifolds with curvature bounded above.  
\begin{lemma}[\cite{Ohta}, Proposition 3.1]\label{t:CATkappa-pucvx}
A CAT$(\kappa)$ space is locally $2$-uniformly convex with parameter $c\nearrow 2$ as the 
diameter of the local neighborhood vanishes.  In particular, for any CAT$(\kappa)$ space
$(G, d)$ and any point $\xbar\in G$, for all $\delta\in (0, \pi/(4\sqrt{\kappa}))$ the subspace 
$(\Ball_\delta(\xbar), \left.d\right|_{\Ball_\delta(\xbar)})$ is a $2$-uniformly 
convex space with constant $c_\delta = 4\delta\sqrt{\kappa}\tan\paren{\pi/2 - 2\delta\sqrt{\kappa}}$.  
\end{lemma}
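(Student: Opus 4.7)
The plan is to reduce the inequality to a computation in the $2$-dimensional model space $M^2_\kappa$, namely the sphere of radius $1/\sqrt{\kappa}$, and then carry out the spherical trigonometry that produces the explicit constant. The CAT($\kappa$) hypothesis exists precisely to allow this reduction, so the argument splits into (i) a comparison step and (ii) an explicit estimate on the sphere.

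First I would verify that the geodesic triangle $\Delta(x,y,z)$ with $x,y,z\in\Ball_\delta(\xbar)$ is admissible: since $\delta<\pi/(4\sqrt\kappa)$, the perimeter is strictly smaller than $\pi/\sqrt\kappa<2\pi/\sqrt\kappa$, so a comparison triangle $\Delta(\xbar',\ybar',\bar z')$ exists in $M^2_\kappa$ and is uniquely determined up to isometry. Let $\gamma(t)=(1-t)x\oplus ty$ be the geodesic in $G$ and $\gamma'(t)$ the corresponding point on the comparison edge. The CAT($\kappa$) inequality yields $d(z,\gamma(t))\le d_{M^2_\kappa}(\bar z',\gamma'(t))$, while the other three distances in \eqref{e:p-ucvx} are preserved exactly. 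Hence it suffices to prove the $2$-uniform convexity inequality with constant $c_\delta$ on $M^2_\kappa$ itself, for configurations contained in a spherical ball of radius $\delta$.

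The second step is the actual spherical estimate. Set $R=1/\sqrt\kappa$, and denote the sides of the spherical comparison triangle by $a=d(\bar z',\xbar')$, $b=d(\bar z',\ybar')$ and $\ell=d(\xbar',\ybar')$, with the geodesic-midpoint distance $m(t)=d(\bar z',\gamma'(t))$. Applying the spherical law of cosines to the two subtriangles sharing the edge from $\bar z'$ to $\gamma'(t)$, one obtains a closed-form relation of the type
\[
\cos(m(t)/R)\sin(\ell/R)=\sin((1-t)\ell/R)\cos(a/R)+\sin(t\ell/R)\cos(b/R),
\]
which, after multiplying through and using the identity $1-\cos(s/R)=2\sin^2(s/(2R))$, converts into a convexity-type inequality for $m(t)^2$ versus $(1-t)a^2+tb^2$. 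The deficit term, after collecting everything over the common factor $t(1-t)\ell^2$, takes the form $\tfrac{1}{2}t(1-t)\ell^2\cdot\Phi(\delta,\kappa)$, where $\Phi$ is a trigonometric expression that is monotone in the diameter of the configuration.

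The main obstacle is extracting the explicit constant $c_\delta=4\delta\sqrt\kappa\tan(\pi/2-2\delta\sqrt\kappa)$ as a uniform lower bound on $\Phi$, and showing that $\Phi\nearrow 2$ as $\delta\downarrow 0$. This is where the restriction $\delta<\pi/(4\sqrt\kappa)$ is used: it guarantees that all four distances $a,b,\ell,m(t)$ lie in $[0,2\delta]\subset[0,\pi/(2\sqrt\kappa))$, so that $\cos$ is positive and the function $s\mapsto (s/R)/\tan(s/R)$, which controls the ratio between squared distance and its $\sin$-analogue, is bounded below by $2\delta\sqrt\kappa\cot(2\delta\sqrt\kappa)$. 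Plugging this worst-case bound into $\Phi$ yields $c_\delta$ as stated, and taking $\delta\to 0$ recovers the flat (Hilbert/CAT($0$)) constant $c=2$. I would then remark that nothing changes when $\kappa\le 0$ beyond replacing spherical trigonometry by Euclidean/hyperbolic formulas, in which case $c=2$ holds globally.
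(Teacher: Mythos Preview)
The paper does not supply its own proof of this lemma; it is quoted directly from \cite{Ohta}, Proposition~3.1, and used as a black box throughout. Your outline---reduce via the CAT($\kappa$) comparison inequality to the model sphere $M^2_\kappa$, then apply the spherical Stewart-type identity $\cos(m(t)/R)\sin(\ell/R)=\sin((1-t)\ell/R)\cos(a/R)+\sin(t\ell/R)\cos(b/R)$ and bound the resulting deficit uniformly using $s\cot s\ge 2\delta\sqrt\kappa\cot(2\delta\sqrt\kappa)$ on $[0,2\delta\sqrt\kappa]$---is exactly the standard route and is essentially how the result is established in the source. The only technical points you glossed over are that $\Ball_\delta(\xbar)$ must itself be a geodesic space (this follows from convexity of balls of radius less than $\pi/(2\sqrt\kappa)$ in CAT($\kappa$) spaces, so your restriction $\delta<\pi/(4\sqrt\kappa)$ is ample) and the bookkeeping in passing from the cosine identity to the squared-distance inequality, but neither poses a genuine obstacle.
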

Note the asymptotic behavior of the constants:
as $\delta\searrow 0$,  the constant $c\nearrow 2$. 

\begin{mydef}
Let $(G,d)$ be a geodesic space and $\gamma$ and $\eta$ be two geodesics through $p$. 
Then $\gamma$ is said to be perpendicular to $\eta$ at point $p$ denoted by $\gamma \perp_p \eta$ if
\begin{align*}
d(x,p)\leq d(x,y) \quad \forall x\in \gamma, y \in \eta
\end{align*}
A space is said to be symmetric perpendicular if for all geodesics $\gamma$ and $\eta$ with common point $p$ we have
\begin{align*}
\gamma \perp_p \eta \Leftrightarrow \eta \perp_p \gamma.
\end{align*}
\end{mydef}

\begin{remark} \label{r:remsymmetric}
Any $CAT(\kappa)$ space $(G,d)$ with $diam(G) < \frac{\pi}{2\sqrt{\kappa}} $ is symmetric perpendicular 
\cite[Theorem 2.11]{Kuwae2014}.
\end{remark}

In a complete $p$-uniformly convex space the 
$p$-proximal mapping of a proper and lower semicontinuous function $f$ is defined by
\begin{align}\label{e:prox^p}
\prox^p_{f, \lambda} (x) \coloneqq \argmin_{y \in G} f(y)+ \frac{1}{p \lambda^{p-1}} d(y,x)^p \quad(\lambda>0). 
\end{align}
The main dividend of this work is the following.
\begin{thm}[convergence of proximal algorithms in CAT($\kappa$) spaces]
	\label{t:convergence ppa-proto}
	Let $(G, d)$ be a $CAT(\kappa)$ space with $\kappa >0$ and for $j=1,2,\dots,N$, let   
	$f_j$ and $g\colon G \rightarrow \Rbb\cap \{+\infty\}$ be  
proper, convex and lower semicontinuous. 
Let $\mymap{T}{D}{D}$ where $D\subset G$ denote one of the following:
\begin{enumerate}[(i)]
	\item $T\equiv\prox_{f_N,\lambda_N}\circ \prox_{f_{N-1},\lambda_{N-1}}\circ\dots\circ\prox_{f_1,\lambda_1}$;
	\item $T\equiv \beta\prox_{g,\lambda}\oplus (1-\beta)\Id$;
	\item $T\equiv \prox_{f_1,\lambda_1}\circ\paren{\beta\prox_{g,\lambda_2}\oplus (1-\beta)\Id}$;
	\item $T\equiv P_C\circ\paren{\beta\prox_{g,\lambda_1}\oplus (1-\beta)\Id}$,
\end{enumerate}
where in the last case $P_C$ is the metric projector onto the closed convex set $C$. 
If  $T$ satisfies $\Fix T\neq\emptyset$ and 
\[
 d(x,\Fix T\cap D)\leq \mu d(x,Tx),\hspace{0.2cm}\forall x\in D\subset G, 
\]
with constant  $\mu$, then the fixed point sequence  initialized from any starting point close 
enough to $\Fix T$ is at least linearly convergent 
to a point in $\Fix T$.
\end{thm}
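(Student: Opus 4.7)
\medskip

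\textbf{Proof plan.} My plan is to follow the standard template for deducing linear convergence of fixed point iterations from metric subregularity plus a firmly-nonexpansive-type descent inequality, but executed carefully in the CAT($\kappa$) setting where one has only \emph{local} $2$-uniform convexity. The target inequality I would aim for at every $x$ in a neighborhood $U\subset D$ of $\Fix T$ and every $\xbar\in\Fix T\cap D$ is
\begin{equation*}
d(Tx,\xbar)^2 \;\leq\; d(x,\xbar)^2 \;-\; \alpha\, d(x,Tx)^2
\end{equation*}
with some $\alpha>0$ depending only on $\beta$, $N$ and the local $p$-uniform convexity constant $c_\delta$ from \Cref{t:CATkappa-pucvx}. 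Combined with the assumed subregularity $d(x,\Fix T\cap D)\leq\mu\, d(x,Tx)$ and the choice $\xbar\in P_{\Fix T\cap D}(x)$, this immediately yields
\begin{equation*}
d(Tx,\Fix T\cap D)^2 \;\leq\; d(Tx,\xbar)^2 \;\leq\; \paren{1-\tfrac{\alpha}{\mu^2}}\,d(x,\Fix T\cap D)^2,
\end{equation*}
i.e.\ linear contraction of $d(x_k,\Fix T\cap D)$, from which convergence of $(x_k)$ to a single fixed point follows by a standard telescoping/Cauchy argument together with the descent estimate.

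For the four cases I would proceed one by one. In each case the base object is a prox (or a metric projector, which is a prox of an indicator), and the first step is to record the firmly nonexpansive inequality for $\prox_{f,\lambda}$ in a $2$-uniformly convex space with constant $c$, namely $d(\prox_{f,\lambda} x,\xbar)^2\leq d(x,\xbar)^2-\tfrac{c}{2}d(x,\prox_{f,\lambda} x)^2$ for any $\xbar\in\Fix\prox_{f,\lambda}$. For case (ii), the $\beta$-relaxation, this is essentially routine: apply the $p$-uniform convexity inequality \eqref{e:p-ucvx} with the geodesic midpoint $\beta\prox_{g,\lambda}x\oplus(1-\beta)x$ and use the firm nonexpansive estimate. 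For case (i), compositions, the trick is to write $d(x,Tx)^2\leq N\sum_j d(y_{j-1},y_j)^2$ along the composition chain (Cauchy–Schwarz on the geodesic chain) and sum the one-step firm nonexpansive inequalities telescopically; this gives the descent with $\alpha\sim c/(2N)$. Cases (iii) and (iv) then reduce to combining (i) and (ii).

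The local nature of the result forces me to first choose a ball $\Ball_\delta(\xbar^*)$ around some anchor fixed point small enough that (a) \Cref{t:CATkappa-pucvx} gives a uniform $c_\delta>0$, (b) $\Fix T\cap \Ball_\delta(\xbar^*)\neq\emptyset$ and the subregularity hypothesis applies, and (c) iterates starting close enough to $\Fix T$ remain in that ball. Point (c) uses that the descent inequality is Fej\'er-monotone-like: $d(Tx,\xbar)\leq d(x,\xbar)$ for $\xbar$ the nearest fixed point, so a sufficiently small initial distance guarantees that the whole orbit stays inside $\Ball_\delta(\xbar^*)$, and then the symmetric-perpendicularity \Cref{r:remsymmetric} together with local $2$-uniform convexity can be invoked throughout.

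The main obstacle I expect is case (i): compositions do not compose firm nonexpansiveness in metric spaces as nicely as in Hilbert space, because there is no analogue of the cross-term identity $\|x-y\|^2=\|x-z\|^2+\|z-y\|^2+2\langle x-z,z-y\rangle$. I would therefore be careful to exploit the $2$-uniform convexity inequality \eqref{e:p-ucvx} as a substitute for the parallelogram law, using the symmetric-perpendicularity \Cref{r:remsymmetric} to ensure that the geodesic chain $x\to y_1\to\cdots\to y_N=Tx$ can be controlled from above by $N\cdot d(x,Tx)^2$ up to a factor involving $c_\delta$. If this step cannot be done cleanly with the tools in the excerpt, a fallback is to prove a weaker ``pointwise almost averaged'' property along the orbit and still extract linear convergence via the subregularity quotient, which is enough because linear convergence is a rate statement and small constants in $\alpha$ only affect the rate, not the qualitative conclusion.
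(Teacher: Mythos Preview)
Your plan is essentially the paper's route: establish a quasi-$\alpha$-firm descent inequality at fixed points for each building block (the paper packages this as ``pointwise asymptotically $\alpha$-firmly nonexpansive'' via \Cref{t:p-prox properties} and \Cref{t:combproxfg 0}), then feed it into the linear-convergence engine driven by metric subregularity (\Cref{t:msr sufficient}, which unwinds to exactly your contraction-of-$d(x_k,\Fix T)$ computation, followed by the Cauchy argument of \Cref{t:rm and qafne to convergence}). Your composition telescoping is precisely \Cref{t:compositionthm} specialized to common fixed points, and your localization to a small ball is what the paper formalizes as the ``asymptotic'' notion.

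The one step you are taking for granted is the identification $\Fix T=\bigcap_j\argmin f_j$ (respectively $\argmin f_1\cap\argmin g$, $C\cap\argmin g$) in cases (i), (iii), (iv). Your telescoping requires the anchor $\xbar\in P_{\Fix T\cap D}(x)$ to be fixed by \emph{each} prox factor; the inclusion $\Fix T\supset\bigcap_j\Fix(\prox_{f_j})$ is trivial, but the reverse is not. The paper secures it via \Cref{t:intersections}, which needs quasi \emph{strict} nonexpansiveness of every factor---supplied by \Cref{t:p-prox properties}\eqref{t:quasi_ne_symetric} through the symmetric-perpendicular property. Without this, a point in $\Fix T$ need not lie in any individual $\argmin f_j$, and the one-step firm inequalities you want to sum are unavailable. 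A minor side note: the constant in your base inequality $d(\prox_{f,\lambda}x,\xbar)^2\le d(x,\xbar)^2-\tfrac{c}{2}d(x,\prox_{f,\lambda}x)^2$ is not $c/2$ in a general $2$-uniformly convex symmetric-perpendicular space; the argument in the proof of \Cref{t:p-prox properties}\eqref{t:quasi_ne_symetric} gives $c/8$. This is harmless for the qualitative conclusion.
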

A more precise statement of this theorem, with proof,  is Theorem \ref{t:convergence ppa}. 
The intervening sections prove the fundamental building blocks.  

\section{Almost $\alpha$-firmly Nonexpansive Mappings}\label{s:Metric}
The regularity of a mapping $\mymap{T}{G}{G}$ is characterized by the behavior of the images of 
pairs of points under $T$.  A key tool is what has been called the {\em transport discrepancy} in \cite{BLL}: 
\begin{equation}\label{eq:psi}
\quad \psi^{(p,c)}_T(x,y) \coloneqq 
\tfrac{c}{2}\paren{d(Tx, x)^p+d(Ty, y)^p + d(Tx, Ty)^p + d(x, y)^p  - d(Tx, y)^p   - d(x,Ty)^p }.
\end{equation}

\begin{mydef}\label{d:pafne}
 Let $(G, d)$ be a $p$-uniformly convex metric space with constant $c$.  
 \begin{enumerate}[(i)]
  \item The mapping $T:G\to G$ 
 is pointwise almost nonexpansive at $y\in D\subset G$ on $D$ with violation $\epsilon>0$
 whenever 
\begin{equation}
\label{eq:pane}
\exists \alpha\in(0,1), \epsilon \geq 0:\quad  d(Tx,Ty)^p\leq (1+\epsilon)d(x,y)^p\quad 
\forall x\in D.
\end{equation}
The smallest $\epsilon$ 
for which \eqref{eq:pafne} holds is called the {\em violation}. 
If \eqref{eq:pane} holds with $\epsilon=0$, then 
$T$ is pointwise nonexpansive at $y\in D\subset G$ on $D$. 
If \eqref{eq:pane} holds at all $y\in D$ then $T$ is said to be (almost) nonexpansive on $D$.  
If $D=G$ the mapping 
$T$ is simply said to be (almost) nonexpansive.  If $D\supset \Fix T\neq\emptyset$ and 
\eqref{eq:pane} holds at all $y\in \Fix T$ with the same violation, then $T$ is 
said to be almost quasi nonexpansive.  
\item  The mapping $T$ is said to be 
{\em  pointwise asymptotically nonexpansive at $y$} whenever 
\begin{equation}
\label{eq:asymp-pane}
\forall \epsilon>0, \exists D_\epsilon(y)\subset G:
\quad  d(Tx,Ty)^p\leq (1+\epsilon)d(x,y)^p\quad 
\forall x\in D_\epsilon(y),
\end{equation}
where $D_\epsilon(y)$ is a neighborhood of $y$ in $D$.
\item $\mymap{T}{G}{G}$ is said to be {\em quasi strictly nonexpansive} whenever
\begin{equation}\label{eq:qsne}
d(Tx,\xbar)< d(x,\bar{x})\quad \forall x \in G, \forall \xbar\in \Fix T.   
\end{equation}
\item  The operator $T:G\to G$ 
is {\em pointwise almost $\alpha$-firmly nonexpansive at $y\in D\subset G$ on $D$ 
	with violation $\epsilon>0$}
whenever
\begin{equation}
\label{eq:pafne}
\exists \alpha\in(0,1), \epsilon \geq 0:
\quad  d(Tx,Ty)^p\leq (1+\epsilon)d(x,y)^p-\tfrac{1-\alpha}{\alpha}\psi^{(p,c)}_T(x,y)\quad 
\end{equation}
If \eqref{eq:pafne} holds with $\epsilon=0$, then 
$T$ is pointwise $\alpha$-firmly nonexpansive at $y\in D\subset G$ on $D$. 
If \eqref{eq:pafne} holds at all $y\in D$ with the same constant $\alpha$, 
then $T$ is said to be (almost) $\alpha$-firmly nonexpansive on $D$.  If $D=G$ the mapping 
$T$ is simply said to be (almost) $\alpha$-firmly nonexpansive.  If $D\supset \Fix T\neq\emptyset$ and 
\eqref{eq:pafne} holds at all $y\in \Fix T$ with the same constant $\alpha$ then $T$ is 
said to be almost quasi $\alpha$-firmly nonexpansive.  
\item  The mapping $T$ is said to be {\em pointwise asymptotically  $\alpha$-firmly 
	nonexpansive at $y$ with constant $\alpha<1$} whenever 
\begin{equation}
\label{eq:asymp-pafne}
\forall \epsilon>0, ~\exists D_\epsilon(y)\subset G:\quad
d(Tx,Ty)^p\leq (1+\epsilon)d(x,y)^p-\tfrac{1-\alpha}{\alpha}\psi^{(p,c)}_T(x,y)\quad 
\forall x\in D_\epsilon(y).
\end{equation}
  where $D_\epsilon(y)$ is a neighborhood of $y$ in $D$.
 \end{enumerate}
\end{mydef}

\begin{proposition}[characterizations]
\label{t:properties pafne}
Let  $(G, d)$ be a p-uniformly convex space with constant $c>0$ and   
let $T:D\to G$ for $D\subset G$.
\begin{enumerate}[(i)]
\item\label{t:properties pafne ii}
\begin{equation}\label{e:psi-Fix T}
\psi_T^{(p,c)}(x, y)=\tfrac{c}{2}d(Tx, x)^p \quad
\mbox{whenever } y\in\Fix T.
\end{equation}
For fixed $y\in\Fix T$ the function $\psi_T^{(p,c)}(x,y)\geq 0$ for all $x \in D$ and  $\psi_T^{(p,c)}(x,y)= 0$ 
only when  $x\in\Fix T$.
\item\label{t:properties pafne iii}  Let $y\in \Fix T$.  $T$ is  pointwise almost $\alpha$-firmly nonexpansive at $y$ 
on $D$ with violation $\epsilon>0$ if and only if  
\begin{equation}\label{e:P1}
\exists \alpha\in[0,1):\quad 	d(Tx,y)^p\leq (1+\epsilon)d(x,y)^p - \tfrac{1-\alpha}{\alpha}\tfrac{c}{2}d(Tx,x)^p\quad
	\forall x\in D.
\end{equation}
In particular, $T$ is almost quasi $\alpha$-firmly nonexpansive on $D$ whenever $T$ possesses fixed points and 
\eqref{e:P1} holds at all $y\in \Fix T$ with the same constant $\alpha\in [0,1)$ and violation $\epsilon$. 
\item\label{t:properties pafne iv}  If $T$ is pointwise almost $\alpha$-firmly nonexpansive at $y\in\Fix T$ on $D$ with 
constant $\underline\alpha\in[0,1)$ and violation $\epsilon$, then it is pointwise almost $\alpha$-firmly nonexpansive 
at $y$ with the same violation on $D$ for all 
$\alpha\in[\underline\alpha,1]$.   In particular, if $T$ is pointwise almost $\alpha$-firmly nonexpansive at $y\in\Fix T$ on D, 
then it is pointwise almost nonexpansive at $y$ with violation $\epsilon$ on D.  
\end{enumerate}
\end{proposition}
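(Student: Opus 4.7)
The proposition splits into three statements, each of which I expect to follow from a direct algebraic manipulation of the transport discrepancy \eqref{eq:psi} together with the defining inequality \eqref{eq:pafne}. There is no heavy analysis here; the plan is essentially to exploit the fact that fixing $y\in\Fix T$ collapses several terms in $\psi_T^{(p,c)}(x,y)$. I will handle the three items in the order they appear, since (ii) uses (i) and (iii) uses (ii).

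For part (i), I would substitute $Ty=y$ into the definition
\[
\psi^{(p,c)}_T(x,y)=\tfrac{c}{2}\paren{d(Tx,x)^p+d(Ty,y)^p+d(Tx,Ty)^p+d(x,y)^p-d(Tx,y)^p-d(x,Ty)^p}.
\]
The term $d(Ty,y)^p$ vanishes, and $d(Tx,Ty)^p=d(Tx,y)^p$ cancels with $-d(Tx,y)^p$, while $-d(x,Ty)^p=-d(x,y)^p$ cancels with $d(x,y)^p$. What remains is precisely $\tfrac{c}{2}d(Tx,x)^p$. Non-negativity is then immediate from the non-negativity of the metric, and the identification of the zero set follows because $d(Tx,x)=0$ holds exactly when $Tx=x$, i.e., $x\in\Fix T$.

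For part (ii), I would use (i) to rewrite the nonexpansiveness inequality \eqref{eq:pafne} at $y\in\Fix T$. Since $Ty=y$ implies $d(Tx,Ty)=d(Tx,y)$, the left-hand side of \eqref{eq:pafne} becomes $d(Tx,y)^p$, and the $\psi$-term on the right becomes $\tfrac{c}{2}d(Tx,x)^p$ by part (i). This gives exactly \eqref{e:P1}. The converse direction simply reverses the substitution. The "in particular" statement is then immediate from the definition of almost quasi $\alpha$-firmly nonexpansiveness, which only requires \eqref{eq:pafne} to hold at all $y\in\Fix T$ with a common constant $\alpha$ and violation $\epsilon$.

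For part (iii), I would invoke the reformulation \eqref{e:P1} from part (ii) and observe that the function $\alpha\mapsto\tfrac{1-\alpha}{\alpha}$ is strictly decreasing on $(0,1]$, so for any $\alpha\in[\underline\alpha,1]$ we have $\tfrac{1-\alpha}{\alpha}\le\tfrac{1-\underline\alpha}{\underline\alpha}$. Because the subtracted quantity $\tfrac{c}{2}d(Tx,x)^p$ is non-negative by part (i), the right-hand side of \eqref{e:P1} only increases when we replace $\underline\alpha$ by the larger $\alpha$, so the inequality is preserved. Taking $\alpha=1$ makes the subtracted term vanish entirely, leaving $d(Tx,y)^p\le(1+\epsilon)d(x,y)^p$, which is exactly pointwise almost nonexpansiveness at $y$ with violation $\epsilon$. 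I do not anticipate any obstacle: the whole proposition is a bookkeeping exercise whose only subtlety is keeping straight which quantities are metric distances to fixed points versus to images.
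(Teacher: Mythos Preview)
Your proposal is correct and constitutes the natural direct argument. The paper itself does not give details: it simply cites \cite[Proposition~4]{BLL} for the case without violation and remarks that the proof for the almost $\alpha$-firmly nonexpansive case is the same, so your write-up in fact supplies the elementary computations that the paper omits by reference.
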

\begin{proof}
This is a slight extension of \cite[Proposition 4]{BLL}, which was for pointwise $\alpha$-firmly
nonexpansive mappings.  The proof for pointwise almost $\alpha$-firmly 
nonexpansive mappings is the same.  
\end{proof}

\subsection{Composition of Operators}
Before continuing with pointwise almost $\alpha$-firmly nonexpansive mappings, we make a 
brief but important observation about fixed points of compositions of quasi strictly nonexpansive 
mappings \eqref{eq:qsne}.  
\begin{lemma}\label{t:intersections}
Let $T_1$ and $T_2$ be quasi strictly nonexpansive on $(G,d)$ with 
$\Fix T_1 \cap \Fix T_2 \neq \emptyset$. Then $\Fix T_1 \circ T_2 = \Fix T_1 \cap \Fix T_2$.
\end{lemma}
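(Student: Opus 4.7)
The inclusion $\Fix T_1 \cap \Fix T_2 \subseteq \Fix(T_1 \circ T_2)$ is immediate: if $x$ is fixed by both $T_1$ and $T_2$, then $T_1(T_2 x) = T_1 x = x$. The content of the lemma is the reverse inclusion, and I would prove it by a standard ``equality in a chain of nonexpansive inequalities forces equality everywhere'' argument, together with the contrapositive reading of quasi strict nonexpansiveness.

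The plan is as follows. Fix any $\xbar \in \Fix T_1 \cap \Fix T_2$ (this is where the nonemptiness hypothesis is used) and let $x \in \Fix(T_1 \circ T_2)$, so that $T_1(T_2 x) = x$. Apply quasi strict nonexpansiveness of $T_2$ at $\xbar \in \Fix T_2$ to get $d(T_2 x, \xbar) \leq d(x, \xbar)$, then apply quasi strict nonexpansiveness of $T_1$ at $\xbar \in \Fix T_1$ to the point $T_2 x$ to get $d(T_1 T_2 x, \xbar) \leq d(T_2 x, \xbar)$. Substituting $T_1 T_2 x = x$ yields
\[
d(x, \xbar) \;=\; d(T_1 T_2 x, \xbar) \;\leq\; d(T_2 x, \xbar) \;\leq\; d(x, \xbar),
\]
so both inequalities are equalities. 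Reading the definition \eqref{eq:qsne} contrapositively (equality $d(Tx,\xbar)=d(x,\xbar)$ for some $\xbar\in\Fix T$ forces $x\in\Fix T$, since otherwise the inequality would be strict), the equality $d(T_2 x, \xbar)=d(x,\xbar)$ forces $x \in \Fix T_2$, i.e.\ $T_2 x = x$. But then $x = T_1 T_2 x = T_1 x$, so $x \in \Fix T_1$ as well, which proves $x \in \Fix T_1 \cap \Fix T_2$.

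There is essentially no obstacle; the only subtle point is the convention that \eqref{eq:qsne} should be read as ``$d(Tx,\xbar)<d(x,\xbar)$ whenever $x \notin \Fix T$'', equivalently ``$d(Tx,\xbar)=d(x,\xbar)$ implies $x \in \Fix T$''. Once that contrapositive is in hand, the proof is two inequalities and a substitution. No further structure of the space (e.g.\ $p$-uniform convexity, symmetric perpendicularity) is needed.
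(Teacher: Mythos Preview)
Your proof is correct and essentially identical to the paper's: both set up the chain $d(x,\xbar)=d(T_1T_2x,\xbar)\le d(T_2x,\xbar)\le d(x,\xbar)$ and use quasi strict nonexpansiveness to force membership in $\Fix T_1\cap\Fix T_2$. The only cosmetic difference is that the paper phrases it as a proof by contradiction (assume $x\notin\Fix T_1\cap\Fix T_2$ and derive a strict inequality), whereas you argue directly via the contrapositive; your remark about the intended reading of \eqref{eq:qsne} is well taken.
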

\begin{proof}
	The inclusion $\Fix T_1 \cap \Fix T_2  \subset \Fix \paren{T_1 \circ T_2}$ is clear. 
	Assume there exists a $y \in \Fix ( T_1 \circ T_2) \setminus (\Fix T_1 \cap \Fix T_2)$ and choose $x \in \Fix T_1 \cap \Fix T_2$. 
	Then 
$$d(y,x)\leq d(T_1 \circ T_2 y,x) \leq d(T_2y,x) \leq d(y,x)$$
with either $d(T_2y,x) < d(y,x)$ or $d(T_1 \circ T_2 y,x) < d(T_2 y,x)$ as $y  \notin \Fix T_1 \cap \Fix T_2$. 
This is a contradiction,  so $\Fix T_1 \circ T_2 = \Fix T_1 \cap \Fix T_2$.
\end{proof}

\begin{remark}
The sufficiency of strict quasi-nonexpansivity for the analogous identity for convex combinations of 
mappings in a Hadamard space was recognized in \cite[Remark 7.11]{Berdellima_PhD}. 
\end{remark}

\begin{lemma}\label{t:afne of compositions}
 Let $(G, d)$ be a $p$-uniformly convex space with constant $c>0$ and let $D\subset G$.  
 Let  ${T_0}:D\to G$ be pointwise almost $\alpha$-firmly nonexpansive at $y$ on 
 $D$ with constant $\alpha_0$ and violation $\epsilon_0$
 and let ${T_1}:{T_0}(D)\to G$ be pointwise almost $\alpha$-firmly nonexpansive at ${T_0} y$ on 
 ${T_0}(D)$ with constant 
 $\alpha_1$ and violation $\epsilon_1$.  
 Then the composition $\Tbar\equiv {T_1}\circ {T_0}$ is pointwise almost $\alpha$-firmly nonexpansive 
 at $y$ with constant $\alphabar \in (0,1)$ and violation 
 $\epsilonbar=\epsilon_0+\epsilon_1 + \epsilon_0\epsilon_1$ on $D$ whenever 
 \begin{equation}
\label{eq:afne composition}
\frac{1-\alphabar}{\alphabar}\psi^{(p,c)}_{\Tbar}(x,y) \leq (1+\epsilon_1)
\frac{1-\alpha_0}{\alpha_0}\psi^{(p,c)}_{{T_0}}(x,y)
 + \frac{1-\alpha_1}{\alpha_1}\psi^{(p,c)}_{{T_1}}({T_0}x,{T_0}y)
\quad \forall x\in D.
\end{equation}
\end{lemma}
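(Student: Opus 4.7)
The statement is tailored for a direct chaining of the two defining inequalities, so my plan is to simply substitute one into the other and appeal to the hypothesis \eqref{eq:afne composition} to package the remainder into the desired $\psi^{(p,c)}_{\Tbar}$ term.

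First, I would apply the definition of pointwise almost $\alpha$-firm nonexpansiveness of $T_1$ at the point $T_0 y$ on $T_0(D)$ to the pair $(T_0 x, T_0 y)$, yielding
\[
d(\Tbar x, \Tbar y)^p \leq (1+\epsilon_1) d(T_0 x, T_0 y)^p - \tfrac{1-\alpha_1}{\alpha_1}\psi^{(p,c)}_{T_1}(T_0 x, T_0 y).
\]
Next, I would bound $d(T_0 x, T_0 y)^p$ using the pointwise almost $\alpha$-firm nonexpansiveness of $T_0$ at $y$ on $D$,
\[
d(T_0 x, T_0 y)^p \leq (1+\epsilon_0) d(x,y)^p - \tfrac{1-\alpha_0}{\alpha_0}\psi^{(p,c)}_{T_0}(x,y),
\]
and substitute. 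Since $(1+\epsilon_1)(1+\epsilon_0) = 1 + \epsilonbar$ with $\epsilonbar = \epsilon_0+\epsilon_1+\epsilon_0\epsilon_1$, this produces
\[
d(\Tbar x, \Tbar y)^p \leq (1+\epsilonbar) d(x,y)^p - \left[(1+\epsilon_1)\tfrac{1-\alpha_0}{\alpha_0}\psi^{(p,c)}_{T_0}(x,y) + \tfrac{1-\alpha_1}{\alpha_1}\psi^{(p,c)}_{T_1}(T_0 x, T_0 y)\right].
\]

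Finally, I would invoke the hypothesis \eqref{eq:afne composition}, which asserts precisely that the bracketed quantity dominates $\tfrac{1-\alphabar}{\alphabar}\psi^{(p,c)}_{\Tbar}(x,y)$. Combining gives
\[
d(\Tbar x, \Tbar y)^p \leq (1+\epsilonbar) d(x,y)^p - \tfrac{1-\alphabar}{\alphabar}\psi^{(p,c)}_{\Tbar}(x,y),
\]
which is the definition of pointwise almost $\alpha$-firm nonexpansiveness of $\Tbar$ at $y$ on $D$ with constant $\alphabar$ and violation $\epsilonbar$, as required.

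\textbf{Main obstacle.} There is essentially no obstacle once the hypothesis \eqref{eq:afne composition} is accepted, since that condition is exactly the bookkeeping needed to fuse the two transport discrepancies into a single one for $\Tbar$. The only subtlety is confirming that a valid $\alphabar \in (0,1)$ exists; however this is implicit in the statement (the hypothesis is assumed to hold for some such $\alphabar$), and in practice one verifies it case by case in the applications of this lemma. The real content of the result is therefore not in the present proof but in the algebraic comparison of $\psi^{(p,c)}_{\Tbar}$ with $\psi^{(p,c)}_{T_0}$ and $\psi^{(p,c)}_{T_1}$, which is deferred to subsequent applications.
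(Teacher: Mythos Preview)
Your proof is correct and follows essentially the same approach as the paper's own argument: apply the $T_1$ inequality, substitute the $T_0$ inequality into it, expand $(1+\epsilon_0)(1+\epsilon_1)=1+\epsilonbar$, and then invoke the hypothesis \eqref{eq:afne composition} to absorb the bracketed sum into the $\psi^{(p,c)}_{\Tbar}$ term. Your observation that the substantive work is deferred to verifying \eqref{eq:afne composition} in applications is also exactly how the paper uses this lemma.
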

\begin{proof}
The proof is a slight extension of the same result for compositions of $\alpha$-firmly nonexpansive mappings 
in \cite[Lemma 10]{BLL}.  Since ${T_1}$ is pointwise $\alpha$-firmly nonexpansive at ${T_0} y$ with violation $\epsilon_1$ and
constant $\alpha_1$ on ${T_0}(D)$ we have 
\[ 
d(\Tbar x,\Tbar y)^p\leq  (1+\epsilon_1) d({T_0} x,{T_0} y)^p-\frac{1-\alpha_1}{\alpha_1}\psi^{(p,c)}_{{T_1}}({T_0}x,{T_0}y)
 \quad \forall x \in D
\]
where $\psi^{(p,c)}_{{T_1}}$ is defined by \eqref{eq:psi}. Since ${T_0}$ is $\alpha$-firmly 
nonexpansive at $y$ with constant $\alpha_0$ with violation $\epsilon_0$ on $D$ we have

\begin{align*}
d(\Tbar x,\Tbar y)^p\leq  (1+\epsilon_0)(1+\epsilon_1) d(x,y)^p&
-(1+\epsilon_1) \frac{1-\alpha_0}{\alpha_0}\psi^{(p,c)}_{{T_0}}(x,y)
-\frac{1-\alpha_1}{\alpha_1}\psi^{(p,c)}_{{T_1}}({T_0}x,{T_0}y),
\end{align*}
for all $x \in D$.
Whenever \eqref{eq:afne composition} holds, we conclude that  
\begin{eqnarray*}
\exists~ \alphabar\in(0,1):\quad 
d(\Tbar x,\Tbar y)^p\leq
(1+\epsilonbar)d(x,y)^p-
\frac{1-\alphabar}{\alphabar}\psi^{(p,c)}_{\Tbar }(x,y)
\quad\forall x\in D,
\end{eqnarray*}
where $\epsilonbar = \epsilon_0+\epsilon_1+\epsilon_0\epsilon_1$.
\end{proof}

 \begin{proposition}[compositions of pointwise almost $\alpha$-firmly nonexpansive mappings]
 \label{t:compositionthm}
 Let $(G, d)$ be a $p$-uniformly convex space with constant $c>0$ and let $D\subset G$.  
 Let  $T_0:D\to G$ be pointwise almost $\alpha$-firmly nonexpansive at $y$ on 
 $D$ with constant $\alpha_{0}$ and violation $\epsilon_{0}$
 and let $T_1:T_0(D)\to G$ be pointwise almost $\alpha$-firmly nonexpansive at $y$ on 
 $T_0(D)$ with constant 
 $\alpha_{1}$ and violation $\epsilon_{1}$. Let $y \in \Fix T_0 \cap \Fix {T_1}$. 
 Then the composite operator $\Tbar={T_1} \circ T_0$ is pointwise almost $\alpha$-firmly nonexpansive at 
 $y$ on $D$ with violation  $\epsilonbar=\epsilon_{0}+\epsilon_{1} + \epsilon_{0}\epsilon_{1}$ and constant 
 \begin{equation}\label{eq:alphabar}
	 \alphabar=\tfrac{\alpha_{0}+\alpha_{1}-2\alpha_{0}\alpha_{1}}%
	 {\frac{c}{2}\paren{1-\alpha_{0}-\alpha_{1}+\alpha_{0}\alpha_{1}}+
		 \alpha_{0}+\alpha_{1}-2\alpha_{0}\alpha_{1}}.
 \end{equation}
\end{proposition}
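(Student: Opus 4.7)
The plan is to apply Lemma~\ref{t:afne of compositions} with the claimed $\alphabar$, reducing the task to verifying the compositional hypothesis \eqref{eq:afne composition}. The first step exploits the common fixed point: since $y\in\Fix T_0\cap\Fix T_1$, we also have $\Tbar y=y$, so Proposition~\ref{t:properties pafne}(i) collapses all three transport discrepancies to
\[
\psi^{(p,c)}_{T_0}(x,y)=\tfrac{c}{2}d(T_0 x,x)^p,\quad \psi^{(p,c)}_{T_1}(T_0 x,y)=\tfrac{c}{2}d(\Tbar x,T_0 x)^p,\quad \psi^{(p,c)}_{\Tbar}(x,y)=\tfrac{c}{2}d(\Tbar x,x)^p.
\]
After substitution and cancellation of the common factor $c/2$, condition \eqref{eq:afne composition} reduces, with the shorthand $a\equiv d(T_0 x,x)$ and $b\equiv d(\Tbar x,T_0 x)$, to the pointwise estimate
\[
\tfrac{1-\alphabar}{\alphabar}\,d(\Tbar x,x)^p\leq (1+\epsilon_1)\tfrac{1-\alpha_0}{\alpha_0}\,a^p+\tfrac{1-\alpha_1}{\alpha_1}\,b^p\qquad\forall x\in D.
\]

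The central estimate comes from applying the defining inequality \eqref{e:p-ucvx} of the $p$-uniformly convex space with $z=T_0 x$, $u=x$, $v=\Tbar x$, and a free parameter $t\in(0,1)$. Since $d(T_0 x,(1-t)x\oplus t\,\Tbar x)^p\geq 0$, this reduces to $\tfrac{c}{2}t(1-t)\,d(x,\Tbar x)^p\leq (1-t)a^p+t\,b^p$, and dividing by $t(1-t)$ yields the key bound
\[
d(\Tbar x, x)^p \;\leq\; \tfrac{2}{c}\left[\tfrac{a^p}{t}+\tfrac{b^p}{1-t}\right]\qquad\forall\,t\in(0,1).
\]
Using this bound on the left-hand side, it suffices to select $t\in(0,1)$ so that $\tfrac{2(1-\alphabar)}{c\alphabar\,t}\leq (1+\epsilon_1)\tfrac{1-\alpha_0}{\alpha_0}$ and $\tfrac{2(1-\alphabar)}{c\alphabar(1-t)}\leq \tfrac{1-\alpha_1}{\alpha_1}$. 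Since $1+\epsilon_1\geq 1$, the smallest (most restrictive) $\alphabar$ is obtained by dropping the $\epsilon_1$ factor and enforcing both as equalities, giving $t=\tfrac{2\alpha_0(1-\alphabar)}{c\alphabar(1-\alpha_0)}$ and $1-t=\tfrac{2\alpha_1(1-\alphabar)}{c\alphabar(1-\alpha_1)}$.

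The constraint $t+(1-t)=1$ then forces $\tfrac{2(1-\alphabar)}{c\alphabar}\cdot\tfrac{\alpha_0+\alpha_1-2\alpha_0\alpha_1}{(1-\alpha_0)(1-\alpha_1)}=1$, which rearranges to exactly \eqref{eq:alphabar}. Invoking Lemma~\ref{t:afne of compositions} then delivers the stated conclusion, with violation $\epsilonbar=\epsilon_0+\epsilon_1+\epsilon_0\epsilon_1$ inherited from that lemma. The only delicate point is the substitution $z=T_0 x$, $u=x$, $v=\Tbar x$ in \eqref{e:p-ucvx}: this is the quantitative ``curved-triangle'' bound that records the convexity constant $c$, and is precisely why $c$ appears in the denominator of \eqref{eq:alphabar}—the plain metric triangle inequality $d(\Tbar x,x)^p\leq (a+b)^p$ would lose this $c$-dependence entirely.
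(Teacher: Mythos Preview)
Your proof is correct and follows essentially the same approach as the paper's: both reduce to Lemma~\ref{t:afne of compositions}, simplify the transport discrepancies via Proposition~\ref{t:properties pafne}\eqref{t:properties pafne ii} at the common fixed point, and then apply the $p$-uniform convexity inequality \eqref{e:p-ucvx} with $z=T_0x$ on the segment $[x,\Tbar x]$. The only cosmetic difference is that the paper fixes $t=\kappa_1/(\kappa_0+\kappa_1)$ (with $\kappa_i=(1-\alpha_i)/\alpha_i$) and reads off $\kappabar=\tfrac{c}{2}\kappa_0\kappa_1/(\kappa_0+\kappa_1)$, whereas you solve for $t$ from the matching conditions; the resulting $t$ and $\alphabar$ coincide.
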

\begin{proof}
	This is a minor extension of \cite[Theorem 11]{BLL}. 	
	By Lemma \ref{t:afne of compositions}, it suffices to show \eqref{eq:afne composition}
	at all points  $y\in \Fix {T_1}\cap \Fix {T_0}$.  First, note that   
	$\Fix \Tbar \supset \Fix {T_1}\cap \Fix {T_0}$, 
	so by \eqref{e:psi-Fix T} we have  
	$\psi^{(p.c)}_{T_0}(x,y)=\tfrac{c}{2}d(x,{T_0}x)^p$, $ \psi^{(p.c)}_{T_1}({T_0}x,{T_0}y)=
	\tfrac{c}{2}d({T_0}x,\Tbar x)^p$, and 
 $\psi^{(p.c)}_{\Tbar }(x,y)=\tfrac{c}{2}d(x,\Tbar x)^p$ whenever $y\in \Fix {T_1}\cap \Fix {T_0}$. 
Inequality \eqref{eq:afne composition} in this case simplifies to 
 \begin{equation}\label{eq:afne composition Fix TS}
\exists \kappabar>0:\quad	 
\kappa_{0} (1+\epsilon_{1})d(x, {T_0}x)^p+\kappa_{1} d({T_0}x, \Tbar x)^p
\geq \kappabar d(x, \Tbar x)^p  
\qquad \forall x\in D,
 \end{equation}
 where $\kappa_0\equiv\frac{1-\alpha_0}{\alpha_0}$, 
 $\kappa_{T_1}\equiv\frac{1-\alpha_1}{\alpha_1}$ and 
 $\kappabar\equiv\frac{1-\alphabar}{\alphabar}$ with $\alphabar\in(0,1)$.
By \eqref{e:p-ucvx}, we have 
\begin{eqnarray}
\tfrac{c}{2}t(1-t)d(x,\Tbar x)^p&\leq& \tfrac{c}{2}t(1-t)d(x,\Tbar x)^p+ 
d({T_0}x, (1-t)x\oplus t\Tbar x)^p\nonumber\\
&\leq& (1-t)d({T_0}x,x)^p+
td({T_0}x,\Tbar x)^p
\quad\forall x\in G, \forall t\in(0,1).
\label{e:p-ucvx2}
\end{eqnarray}
Letting $t=\tfrac{\kappa_1}{\kappa_0+\kappa_1}$ yields
$(1-t)=\tfrac{\kappa_0}{\kappa_0+\kappa_1}$, so that 
\eqref{e:p-ucvx2} becomes
\begin{eqnarray}
	\tfrac{c}{2}\tfrac{\kappa_0\kappa_1}{\kappa_0+\kappa_1}d(x,\Tbar x)^p
&\leq& (1+\epsilon_1) \kappa_0 d({T_0}x,x)^p+
\kappa_1 d({T_0}x,\Tbar x)^p
\quad\forall x\in G.
\label{e:p-ucvx3}
\end{eqnarray}
It follows that  \eqref{eq:afne composition Fix TS}  holds for any  
$\kappabar\in \left(0,\tfrac{c\kappa_0\kappa_1}{2(\kappa_0+\kappa_1)}\right]$.  
We conclude that the composition $\Tbar $ is quasi $\alpha$-firmly nonexpansive with constant 
\[
\alphabar=\frac{\kappa_0+\kappa_1}{\frac{c}{2}\kappa_0\kappa_1+\kappa_0+\kappa_1}. 
\]
A short calculation shows that this is the same as \eqref{eq:alphabar}, which completes the proof.
\end{proof}

\subsection{Averages of Operators}
Let $\mathcal{B}(G)$ be the borel algebra on $(G,d)$, $\mathcal{P}$ the family of probability measures on $(G,\mathcal{B}(G))$ and  $\mathcal{P}^{p}(G)$ the family of probability measures on $G$ such that the $p$-th moment exists, i.e.:
\begin{align*}
\mathcal{P}^{p}(G) \coloneqq \{ \nu \in  \mathcal{P}(G) \mid  \int_y d(x,y)^{p} \nu(dx) < \infty \quad \forall x \in G\}.
\end{align*}
For $\mu \in \mathcal{P}^p(G)$ the minimizer of 
\begin{align*}
x\mapsto \int_G d(x,y)^p \nu (dy)
\end{align*}
is called $p$-barycenter of $\nu$ and denoted by $b_p(\nu)$ if it exists. The $p$-barycenter of $\nu$ always exsists if $(G,d)$ is a proper geodesic space and $\mu \in \mathcal{P}^{p}(G)$ \cite[Proposition 3.3]{Kuwae2014}.

Let $T_i \colon G \rightarrow G, i\in I$ be a collection of mappings where $I$ is an index space. Assume that $(I,\mathcal{I})$ is a measurable space and $(x,i) \mapsto T_ix$ is measurable. Let $\eta$ be a probability measure on $I$ and define $b_p(T_i,\eta)\colon G \rightarrow G$ by
\begin{align*}
b_p(T_i,\eta)(x) = \argmin_{y \in G} y \mapsto \int_I d(T_i x,y)^p \eta(di).
\end{align*}

We use the notation $T_ix_*\eta$ for the push forward of $\eta$ with respect to the mapping $i \mapsto T_i x$ for fixed $x$, i.e.
\begin{align}
T_ix_*\eta (A) = \eta (\{i \mid T_ix \in A\}) \label{eq:nu}
\end{align}  for $A \in \mathcal{B}(G)$. Then by definition $b_p(T_i,\eta)(x) = b_p(T_i x _*\eta)$.

\begin{thm}
Let $G$ be a proper, symmetric perpendicular, $p$-uniformly convex space with constant $c>0$ and let $T_i$, $i \in I$ be a family of almost quasi $\alpha$-firmly 
nonexpansive operators with violation $\epsilon_i$ and constant $\alpha_i$ respectively on $G$. Let $\eta$ be a probability measure on $I$ such that $T_ix_*\eta \in  \mathcal{P}^{p}(G) $ for all $x\in G$. Then $\mathscr{T}=b_p(T_i,\eta)$ is a pointwise almost $\alpha$-firm operator at any $y \in \cap_{i\in I} \Fix T_i$ on $G$ with 
constant $\alphabar=\sup_{i\in I} \alpha_i$ and violation $\epsilonbar= \sup_{i\in I} \epsilon_i$.
\end{thm}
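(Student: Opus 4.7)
The plan is to combine the fixed-point characterization \eqref{e:P1} of almost quasi $\alpha$-firm nonexpansiveness, applied pointwise to each $T_i$, with a Jensen-type inequality for the $p$-barycenter that holds under the symmetric perpendicular hypothesis. Fix $y\in \bigcap_{i\in I}\Fix T_i$ and set $\kappabar\equiv\tfrac{1-\alphabar}{\alphabar} = \inf_{i\in I}\tfrac{1-\alpha_i}{\alpha_i}$, which matches $\alphabar=\sup_{i\in I}\alpha_i$. Properness of $G$ together with $T_ix_*\eta\in\mathcal{P}^p(G)$ ensures that $\mathscr{T}x = b_p(T_ix_*\eta)$ is well defined.

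First I would invoke Jensen's inequality for $p$-barycenters in proper, symmetric perpendicular, $p$-uniformly convex spaces (this is the intended role of symmetric perpendicularity, along the lines of \cite{Kuwae2014}). For any fixed $z\in G$ the function $w\mapsto d(z,w)^p$ is convex along geodesics by \eqref{e:p-ucvx}, so Jensen applied to $\nu = T_ix_*\eta$ from \eqref{eq:nu} yields the key bound
\[
d(\mathscr{T}x,z)^p \;\leq\; \int_I d(T_ix,z)^p\,\eta(di) \qquad \forall z\in G.
\]
I will use this twice, at $z=y$ and at $z=x$.

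Next, since $y\in\Fix T_i$ for every $i$, the characterization \eqref{e:P1} applied to each $T_i$ gives
\[
d(T_ix,y)^p \;\leq\; (1+\epsilon_i)\,d(x,y)^p - \tfrac{1-\alpha_i}{\alpha_i}\tfrac{c}{2}\,d(T_ix,x)^p.
\]
Integrating against $\eta$ and using $\epsilon_i\leq\epsilonbar$ together with $\tfrac{1-\alpha_i}{\alpha_i}\geq\kappabar$,
\[
\int_I d(T_ix,y)^p\,\eta(di) \;\leq\; (1+\epsilonbar)\,d(x,y)^p - \kappabar\tfrac{c}{2}\int_I d(T_ix,x)^p\,\eta(di).
\]
Applying the Jensen bound at $z=y$ to dominate the left-hand side by $d(\mathscr{T}x,y)^p$, and at $z=x$ so that $\int_I d(T_ix,x)^p\,\eta(di)\geq d(\mathscr{T}x,x)^p$ enlarges the magnitude of the negative term on the right, one arrives at
\[
d(\mathscr{T}x,y)^p \;\leq\; (1+\epsilonbar)\,d(x,y)^p - \tfrac{1-\alphabar}{\alphabar}\tfrac{c}{2}\,d(\mathscr{T}x,x)^p,
\]
which is precisely \eqref{e:P1} for $\mathscr{T}$ at $y$ with the claimed constants.

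The main obstacle is justifying the Jensen bound with the sharp factor $1$ on the left-hand side. A variance-type inequality derived solely from \eqref{e:p-ucvx} via the defining optimality of $\mathscr{T}x$ would yield only $\tfrac{c}{2}\,d(\mathscr{T}x,z)^p \leq \int_I d(T_ix,z)^p\,\eta(di) - \int_I d(\mathscr{T}x,T_ix)^p\,\eta(di)$, inflating the violation by a spurious factor $2/c$ and correspondingly degrading the constant $\alphabar$. The symmetric perpendicular hypothesis is precisely what upgrades this variance bound to a true Jensen inequality and preserves the sharp constants $\alphabar=\sup_i\alpha_i$ and $\epsilonbar=\sup_i\epsilon_i$ advertised in the statement.
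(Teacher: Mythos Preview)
Your approach is essentially the paper's: both apply Jensen's inequality for $p$-barycenters (from \cite{Kuwae2014}, which is exactly where the symmetric perpendicular hypothesis enters) to the convex functions $d(\cdot,y)^p$ and $d(\cdot,x)^p$, sandwich the pointwise characterization \eqref{e:P1} for each $T_i$ in between, and pass to $\epsilonbar,\alphabar$ via the monotone bounds $\epsilon_i\le\epsilonbar$ and $\tfrac{1-\alpha_i}{\alpha_i}\ge\tfrac{1-\alphabar}{\alphabar}$.

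One technical discrepancy is worth flagging. You state Jensen in the integral form $d(\mathscr{T}x,z)^p \le \int_I d(T_ix,z)^p\,\eta(di)$. The version actually invoked from \cite[Theorem~4.1]{Kuwae2014} bounds $d(b_p(\nu),z)^p$ by the \emph{$p$-barycenter} $b_p\bigl((d(\cdot,z)^p)_*\nu\bigr)$ of the pushforward on $\Rbb$, not by its mean. For $p=2$ the two coincide (the $2$-barycenter on $\Rbb$ is the mean), so in the CAT($\kappa$) setting your argument goes through verbatim. For general $p$ the paper instead carries the real $p$-barycenter through the chain, using its monotonicity under pointwise domination of the integrand together with translation invariance (to extract the constant $(1+\epsilonbar)d(x,y)^p$) in place of linearity of the integral, and then applies Jensen a second time at $z=x$ exactly as you do. Your argument adapts immediately: replace each $\int(\cdot)\,\eta(di)$ by $b_p\bigl((\cdot)_*\eta\bigr)$ and the same inequalities hold.
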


\begin{proof}
Let $x \in G$ be arbitrary, $y \in \cap_{i\in I} \Fix T_i \subset \Fix \mathscr{T} $, $\nu = T_i x _*\eta$ as defined in (\ref{eq:nu}) and 
$(d(\cdot,y)^p) _* \nu$ the push forward of $\nu$ . Then 
\begin{align*}
d(\mathscr{T} x,\mathscr{T} y)^p = d(b_p(T_ix_* \eta),y)^p = d(b_p(\nu),y)^p
\leq b_p((d(\cdot,y)^p) _* \nu)
\end{align*}
by Jensens inequality \cite[Theorem 4.1]{Kuwae2014} since $d(\cdot,y)^p$ is a convex function and the 
fact that $\Rbb$ is a $p$-uniformly convex space with constant $c$.
Now
\begin{align*}
b_p((d(\cdot,y)^p) _* \nu) 
&= \argmin_{t \in \Rbb} \int_z \vert t- d(z,y)^p \vert^p \nu(dz) \\
&= \argmin_{t \in \Rbb} \int_i \vert t- d(T_ix,y)^p \vert^p \eta(di) \\
&\leq \argmin_{t \in \Rbb} \int_i \vert t- [(1+\epsilon_i) d(x,y)^p- \frac{1-\alpha_i}{\alpha_i} \frac{c}{2} d(T_i x,x)^p] \vert^p \eta(di) \\\
&\leq (1+\epsilonbar) d(x,y)^p - \argmin_{t \in \Rbb} \int_i \vert t - \frac{1-\alphabar}{\alphabar} \frac{c}{2} d(T_ix,x)^p \vert^p \eta(di) \\
&= (1+\epsilonbar) d(x,y)^p -  \frac{1-\alphabar}{\alphabar} \frac{c}{2} \argmin_{t \in \Rbb} \int_z \vert t - d(z,x)^p \vert^p \nu(dz) \\
&=(1+\epsilonbar) d(x,y)^p -  \frac{1-\alphabar}{\alphabar} \frac{c}{2}  b_p ((d(\cdot,y)^p) _* \nu).
\end{align*}
And again Jensens inequality completes the proof
\begin{align*}
(1+\epsilonbar) d(x,y)^p - b_p (\frac{1-\alphabar}{\alphabar} \frac{c}{2} d(x,T_ix)_* \eta) \\
\leq (1+\epsilonbar) d(x,y)^p - \frac{1-\alphabar}{\alphabar} \frac{c}{2} d(x, b_p (T_ix_* \eta))^p.
\end{align*}
\end{proof}
For a finite index set $I$, without loss of generality $I=\{1,\ldots,n\}$ and a probability measure $\eta$ on $I$ we can define $\omega_i \coloneqq \eta(i)$ for all $i$. Then 
\begin{align*}
b_p(T_i,\eta)(x)=\argmin_{z\in G} \sum_{i=1}^n \omega_i d(z,T_ix)^p.
\end{align*}
In case of a Hilbert space $G$ and $p=c=2$ this reduces further to $b_p(T_i,\eta)(x) = \sum_{i=1}^n \omega_i T_i x$.

If the support of the measure $\nu$ consits of two disrecte points $x_1$ and $x_2$, i.e. for $\omega \in [0,1]$ $\nu = \omega \delta_{x_1}+(1-\omega)\delta_{x_2}$, then $b_p(\nu)$ can be calculated explicitly. It is obvious that $b_p(\nu)$ has to lie on the geodesic connecting $x_1$ and $x_2$. Hence $ b_p(\nu) = \tbar x_1 \oplus (1-\tbar) x_2$ for some $\tbar \in [0,1]$. Minimizing the function $t\mapsto \omega d( t x_1 \oplus (1-t) x_2 , x_1)^p+ (1-\omega) d( t x_1 \oplus (1-t) x_2 , x_1)^p$ leads to $\tbar = \frac{1}{\sqrt[p-1]{\frac{1-\omega}{\omega}}+1}$. If $I=\{1,2\}$, $T_1=T$ and $T_2=Id$, $\eta=\omega \delta_1(\cdot)+ (1-\omega) \delta_2(\cdot)$ for $\omega \in [0,1]$ then $T_\beta=b_p(T_i,\eta)$ is Krasnoselsky-Mann relaxation of $T$  
$$T_\beta=\beta T \oplus (1-\beta) Id \quad \text{with } \beta=\frac{1}{\sqrt[p-1]{\frac{1-\omega}{\omega}}+1}.$$

The next result shows that the convex combination of an almost nonexpansive mapping with the identity mapping 
can be made arbitrarily  close to $\alpha$-firmly nonexpansive (no violation) by choosing the 
averaging constant small enough  -- this can be interpreted as choosing an appropriately small step size.    
\begin{proposition}[Krasnoselsky-Mann relaxations]\label{t:KM}
Let $(G,d)$ be a $p$-uniformly convex space and $T\colon G \rightarrow G$ 
be pointwise almost nonexpansive at all $y \in \Fix T$ with violation $\epsilon$. Then 
$T_\beta \coloneqq \beta T \oplus (1-\beta) Id$ is pointwise almost $\alpha$-firmly 
nonexpansive at all $y \in \Fix T$ with constant 
\[
\alpha_\beta = \frac{\alpha\beta^{p-1}}{\alpha\beta^{p-1} -\alpha\beta+ 1}
\]
and violation $\epsilon_\beta\equiv \epsilon\beta$.
\end{proposition}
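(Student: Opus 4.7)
The plan is to apply the defining $p$-uniform convexity inequality \eqref{e:p-ucvx} to the geodesic average that defines $T_\beta x$, anchored at a fixed point $y\in\Fix T$, then combine with the pointwise regularity of $T$ at $y$, and finally convert the residual displacement $d(Tx,x)$ into $d(T_\beta x,x)$ via the geodesic parametrization of $T_\beta x$. Note that the appearance of the constant $\alpha$ inside the expression for $\alpha_\beta$ forces $T$ to be read as pointwise almost $\alpha$-firmly nonexpansive at $y$ with constant $\alpha$ and violation $\epsilon$ (a pointwise almost-nonexpansive hypothesis alone would yield only the $\alpha=1$ specialization); I will carry out the proof in this sharper form, from which the nonexpansive case falls out by setting $\alpha=1$.

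Concretely, I first apply \eqref{e:p-ucvx} to the geodesic $t\mapsto (1-t)x\oplus tTx$ at $t=\beta$ with $z=y\in\Fix T$, obtaining
\begin{equation*}
d(y,T_\beta x)^p \leq (1-\beta)d(y,x)^p + \beta\,d(y,Tx)^p - \tfrac{c}{2}\beta(1-\beta)\,d(Tx,x)^p.
\end{equation*}
Since $y\in\Fix T$, the characterization \eqref{e:P1} gives
\begin{equation*}
d(y,Tx)^p \leq (1+\epsilon)d(y,x)^p - \tfrac{1-\alpha}{\alpha}\tfrac{c}{2}\,d(Tx,x)^p,
\end{equation*}
and inserting this into the previous display and collecting like terms yields
\begin{equation*}
d(y,T_\beta x)^p \leq (1+\beta\epsilon)d(y,x)^p - \tfrac{c}{2}\tfrac{\beta(1-\alpha\beta)}{\alpha}\,d(Tx,x)^p,
\end{equation*}
which already identifies the violation as $\epsilon_\beta=\beta\epsilon$.

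To produce the $\alpha_\beta$-firmness form with $d(T_\beta x,x)^p$ on the right-hand side, I use that $T_\beta x$ is by definition the point of the geodesic from $x$ to $Tx$ at parameter $\beta$, so $d(x,T_\beta x)=\beta\,d(x,Tx)$ and hence $d(Tx,x)^p=\beta^{-p}d(T_\beta x,x)^p$. Substituting gives
\begin{equation*}
d(y,T_\beta x)^p \leq (1+\beta\epsilon)d(y,x)^p - \tfrac{1-\alpha\beta}{\alpha\beta^{p-1}}\tfrac{c}{2}\,d(T_\beta x,x)^p,
\end{equation*}
and solving $\tfrac{1-\alpha_\beta}{\alpha_\beta}=\tfrac{1-\alpha\beta}{\alpha\beta^{p-1}}$ recovers the stated formula for $\alpha_\beta$. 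Since $y\in\Fix T\subset\Fix T_\beta$, \eqref{e:P1} then identifies $T_\beta$ as pointwise almost $\alpha$-firmly nonexpansive at $y$ with constant $\alpha_\beta$ and violation $\epsilon_\beta$.

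No step is genuinely hard. The one point requiring explicit justification is the geodesic identity $d(x,T_\beta x)=\beta\,d(x,Tx)$, since it is what converts the $(1-\beta)$ factor coming from the $p$-uniform convexity correction into the $\beta^{p-1}$ denominator inside $\alpha_\beta$; getting the exponents right is the only place where a mechanical error could easily slip in.
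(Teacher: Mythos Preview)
Your argument is correct and follows essentially the same route as the paper: apply the $p$-uniform convexity inequality \eqref{e:p-ucvx} with $z=y\in\Fix T$, insert the pointwise almost $\alpha$-firm inequality \eqref{e:P1} for $d(y,Tx)^p$, and rescale via $d(x,T_\beta x)=\beta\,d(x,Tx)$. Your observation that the hypothesis must really be pointwise almost $\alpha$-firm nonexpansiveness (not merely almost nonexpansiveness) for the stated $\alpha_\beta$ to make sense is exactly right---the paper's own proof silently uses the $\alpha$-firm inequality as well.
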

\begin{proof}
Clearly $\Fix T = \Fix T_\beta$ and $d(x,T_\beta x)^p=\beta^p d(x,Tx)^p$. 
Let $y \in Fix T_\beta$ then 
\begin{align*}
d(y,T_\beta x)^p &=d(y, \beta Tx \oplus (1-\beta)x)^p \\
&\leq \beta d(y,Tx)^p+(1-\beta) d(y,x)^p- \frac{c}{2} \beta (1-\beta) d(x,Tx)^p \\
& \leq (1+\epsilon\beta) d(x,y)^p - \frac{c}{2}\beta^{1-p}\paren{\tfrac{1-\alpha}{\alpha}+1-\beta} d(x,T_\beta x)^p.
\end{align*}
Setting 
\[
\tfrac{1-\alpha_\beta}{\alpha_\beta} = \beta^{1-p}\paren{\tfrac{1-\alpha}{\alpha}+1-\beta}
\]
and solving for $\alpha_\beta$ yields the result.  
\end{proof}

\subsection{Metric Subregularity}
 Recall that $\mu:[0,\infty) \to [0,\infty)$ is a \textit{gauge function} if 
$\mu$ is continuous, strictly increasing 
with $\mu(0)=0$, and $\lim_{t\to \infty}\mu(t)=\infty$. 
\begin{mydef}[metric regularity on a set]\label{d:msr}
$~$ Let $(G_1, d_1)$ and $(G_2, d_2)$ be metric spaces and let  $\mmap{\Phi}{G_1}{G_2}$, 
$U\subset G_1$, $V\subset G_2$.
The mapping $\Phi$ is called \emph{metrically regular with gauge $\mu$ on $U\times V$ relative to 
$\Lambda\subset G_1$} if
\begin{equation}\label{e:metricregularity}
\forall y\in V, \forall x\in U\cap \Lambda,\quad 
\dist\paren{x, \Phi^{-1}(y)\cap \Lambda}\leq \mu\paren{\dist\paren{y, \Phi(x)}}.
\end{equation}
When the set $V$ consists of a single point, $V=\{\ybar\}$, then $\Phi$ is said to be 
\emph{metrically subregular for $\ybar$ on $U$ with gauge $\mu$ relative to $\Lambda\subset G_1$}.

When $\mu$ is a linear function (that is, $\mu(t)=\kappa t,\, \forall t\in [0,\infty)$), this 
special case is distinguished as {\em linear metric (sub)regularity with constant $\kappa$}. 
When $\Lambda=G_1$, the quantifier ``relative to'' is dropped.  
When $\mu$ is linear, the infimum of all constants  
$\kappa$ for which \eqref{e:metricregularity} holds is called the {\em modulus} of metric regularity.
\end{mydef}

\noindent The next statement is obvious from the definition. 
\begin{proposition}\label{t:persistence of msr}
Let $(G_1, d_1)$ and $(G_2, d_2)$ be metric spaces and let  $\mmap{\Phi}{G_1}{G_2}$, 
$U\subset G_1$, $V\subset G_2$. If  $\Phi$ is metrically subregular with gauge 
$\mu$ at $y$ on $U$ relative to 
$\Lambda\subset G_1$, then $\Phi$ is metrically subregular with the same gauge 
$\mu$ at $y$ on all subsets $U'\subset U$ relative to 
$\Lambda\subset G_1$.
\end{proposition}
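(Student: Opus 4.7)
The plan is to note, as the authors themselves remark, that this result falls out immediately from unwinding Definition \ref{d:msr}. Metric subregularity with gauge $\mu$ at $\ybar$ on $U$ relative to $\Lambda$ is a universally quantified inequality over the set $U \cap \Lambda$, so restricting to a smaller index set cannot break it.

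Concretely, I would begin by writing out the hypothesis: for every $x \in U \cap \Lambda$,
\[
\dist\paren{x, \Phi^{-1}(\ybar) \cap \Lambda} \leq \mu\paren{\dist\paren{\ybar, \Phi(x)}}.
\]
Then I would fix an arbitrary subset $U' \subset U$ and observe $U' \cap \Lambda \subset U \cap \Lambda$. Consequently the displayed inequality holds in particular for every $x \in U' \cap \Lambda$, which is precisely the definition of $\Phi$ being metrically subregular with gauge $\mu$ at $\ybar$ on $U'$ relative to $\Lambda$.

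There is genuinely no obstacle here: no properties of the gauge $\mu$, of the distance, or of $\Phi$ are invoked beyond those already built into the definition. The proof is a one-line inclusion argument; its inclusion in the paper is just to make the persistence property explicit for use when later localizing the metric subregularity hypothesis to smaller neighborhoods in the convergence analysis.
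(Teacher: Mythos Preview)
Your proposal is correct and matches the paper's own treatment: the authors state that the proposition ``is obvious from the definition'' and give no further argument, which is exactly the one-line inclusion $U' \cap \Lambda \subset U \cap \Lambda$ you spell out.
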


\section{Quantitative Convergence}
To obtain convergence of fixed point iterations under the assumption of metric subregularity, 
the gauge of metric subregularity $\mu$  is constructed 
implicitly from another 
nonnegative function $\mymap{\theta}{[0,\infty)}{[0,\infty)}$ satisfying
\begin{eqnarray}\label{eq:theta}
(i)~ \theta(0)=0; \quad (ii)~ 0<\theta(t)<t ~\forall t>0; 
\quad (iii)~\sum_{j=1}^\infty\theta^{(j)}(t)<\infty~\forall t\geq 0.
\end{eqnarray}
For a $p$-uniformly convex space the operative gauge satisfies 
\begin{equation}\label{eq:gauge}
 \mu\paren{\paren{\frac{(1+\epsilon)t^p-\paren{\theta(t)}^p}{\tau}}^{1/p}}=
 t\quad\iff\quad
 \theta(t) = \paren{(1+\epsilon)t^p - \tau\paren{\mu^{-1}(t)}^p}^{1/p}
\end{equation}
for $\tau>0$ fixed and $\theta$ satisfying \eqref{eq:theta}.  

In the case of linear metric subregularity on a CAT($\kappa$) space
this becomes 
\[
t\mapsto \mu t\quad\iff\quad  
\theta(t)=\paren{(1+\epsilon)-\frac{\tau}{\mu^2}}^{1/2}t\quad 
(\mu\geq \sqrt{\tfrac{\tau}{(1+\epsilon)}}).
\]  
If 
\eqref{e:metricregularity} is satisfied for some $\mu'>0$, then 
the condition $\mu\geq \sqrt{\tfrac{\tau}{(1+\epsilon)}}$ is satisfied
for all $\mu\geq \mu'$ large enough. The conditions in \eqref{eq:theta} in this 
case simplify to $\theta(t)=\gamma t$ where 
\begin{equation}\label{eq:theta linear}
	0< \gamma \equiv \sqrt{1+\epsilon-\frac{\tau}{\mu^2}}<1\quad\iff\quad 
\sqrt{\tfrac{\tau}{(1+\epsilon)}}\leq  \mu \leq \sqrt{\tfrac{\tau}{\epsilon}}.
\end{equation}

The next definition 
characterizes the quantitative convergence of sequences in terms of gauge functions. 
 \begin{mydef}[gauge monotonicity \cite{LukTebTha18}]\label{d:mu_Mon}
Let $(G,d)$ be a metric space, let $(x_k)_{k\in\Nbb}$ be a sequence on $G$, 
let $D\subset G$ be nonempty and let the continuous mapping
$\mymap{\mu}{\Rbb_+}{\Rbb_+}$ satisfy $\mu(0)=0$ and
\begin{align*}
&\mu(t_1)<\mu(t_2)\leq t_2\; \mbox{ whenever }\; 0\le t_1<t_2.
\end{align*}
\begin{enumerate}[(i)]
 \item $(x_k)_{k\in \Nbb}$ is said to be
 \emph{gauge monotone with respect to $D$ with rate $\mu$}  whenever
 \begin{equation}\label{e:mu-uniform mon}
 d(x_{k+1}, D)\leq \mu\paren{d(x_k, D)}\quad  \forall k\in \Nbb .
 \end{equation}
 \item $(x_k)_{k\in \Nbb}$ is said to be
 \emph{linearly monotone with respect to $D$} with rate $c$ if \eqref{e:mu-uniform mon} is
 satisfied for $\mu(t)=c\cdot t$ for all $t\in \Rbb_+$ and some constant $c\in [0,1]$.
 \end{enumerate}
A  sequence $(x_k)_{k\in \Nbb}$ is said to converge 
{\em gauge monotonically} 
to some element  $x^*\in G$ with rate 
$s_k(t)\equiv \sum_{j=k}^\infty \mu^{(j)}(t)$ whenever 
it is gauge monotone with gauge $\mu$ satisfying 
$\sum_{j=1}^\infty\mu^{(j)}(t)<\infty~\forall t\geq 0$, 
and 
there exists a constant 
$a>0$  such that $d(x_k,x^*)\leq  a s_k(t)$ for all $k\in \mathbb{N}$. 
\end{mydef}
All Fej\'er monotone sequences are linearly monotone (with constant $c=1$)
but the converse does not hold (see Proposition 1  and Example 1 of \cite{LukTebTha18}).
Gauge-monotonic convergence for a linear gauge in the definition above is just 
$R$-linear convegence.  

Metric subregularity and pointwise (almost) nonexpansiveness are fundamentally connected through 
the surrogate mapping 
$\mymap{\Tcal_S}{G}{\mathbb{R}_+}\cup\{+\infty\}$ 
defined  by 
 \begin{equation}\label{eq:Tcal}
	 \Tcal_S(x)\equiv 
	 \left|\paren{\inf_{y\in S}\psi^{(p,c)}_T(x,y) }^{1/p}\right|
 \end{equation}
 where $\psi^{(p,c)}_T$ is defined by \eqref{eq:psi} and   $S\subset G$.  
 If $S=\emptyset$ then, by definition, $\Tcal_S(x)\equiv+\infty$ for all $x$.  
 Hence, $\Tcal_S$ is proper  when  $S$ is nonempty.   
 For our purposes, $S\subseteq\Fix T$, in which case by Proposition \ref{t:properties pafne}\eqref{t:properties pafne ii} 
 we have  
$ \psi^{(p,c)}_T(x,y)\geq 0$ for all $x\in D$ and all $y\in S$ and 
$ \psi^{(p,c)}_T(x,y)= 0$ only when both $x,y\in\Fix T$.  Hence 
$\Tcal_S$ is nonnegative, and takes the value $0$ only on $\Fix T$ and has the simple representation  
 \begin{equation}\label{eq:Tcal_Fix_T}
	 \Tcal_S(x)= \sqrt[p]{\tfrac{2}{c}}d(Tx,x)>0\quad (S\neq\emptyset).  
 \end{equation}

\begin{thm}[necessary and sufficient conditions for convergence rates]\label{t:msr convergence}
 Let $(G,d)$ be a $p$-uniformly convex space with constant $c$; let $D\subset G$, 
 let $\mymap{T}{D}{D}$,  
 and let $S\equiv \Fix T\cap D$ be nonempty. 
  Assume further that $T$ is pointwise almost $\alpha$-firmly nonexpansive at all 
 points $y\in S$ 
 with the same constant $\alphabar$ and violation $\epsilon$ on $D$.

 \begin{enumerate}[(a)]
\item\label{t:msr convergence, necessary} (necessity) Suppose that
all sequences $(x^k)_{k\in\mathbb{N}}$ defined by $x^{k+1}=Tx^k$ 
and initialized in $D$ are gauge monotone 
relative to $S$ 
with rate $\theta$ satisfying \eqref{eq:theta}, and 
$(\Id - \theta)^{-1}(\cdot)$ is continuous on $\Rbb_+$, strictly increasing, 
and $(\Id - \theta)^{-1}(0)=0$.  Then all sequences initialized on $D$ 
converge gauge monotonically to some $\xbar\in S$ with rate 
$O(s_k(t_0))$ where 
$s_k(t)\equiv 
\sum_{j=k}^\infty \theta^{(j)}(t)$ and $t_0\equiv d(x^0,\Fix T\cap D)$.
Moreover, $\Tcal_{S}$ defined by 
\eqref{eq:Tcal} is metrically subregular for $0$ relative to $D$ on $D$ 
with gauge $\mu(\cdot)=(\Id-\theta)^{-1}(\cdot)$. 
\item\label{t:msr convergence, sufficient} (sufficiency) If 
 $T$ satisfies 
 \begin{equation}
 \label{eq:estimate1}
 d(x,\Fix T\cap D)\leq \mu(d(x,Tx)),\hspace{0.2cm}\forall x\in D,
 \end{equation}
with gauge $\mu$ given by \eqref{eq:gauge} 
for  $\tau=(1-\alphabar)/\alphabar$ and $\epsilon>0$ the violation of pointwise $\alpha$ 
 firmness of $T$ on $D$, 
then for any $x^0\in D$, the sequence $(x^k)_{k\in\Nbb}$ defined by 
$x^{k+1}= T x^k$ satisfies 
\begin{equation}\label{eq:gauge convergence}
d\paren{x^{k+1},\Fix T\cap D}
\leq \theta\paren{d\paren{x^k,\Fix T\cap D}} 
\quad \forall k \in \mathbb{N},
\end{equation}%
where $\theta$ given implicitly by \eqref{eq:gauge} satisfies \eqref{eq:theta}. 
Moreover, the sequence $(x^k)_{k\in\Nbb}$
converges gauge monotonically  to 
some $x^{*}\in\Fix T\cap D$ with rate 
$O(s_k(t_0))$ where 
$s_k(t)\equiv 
\sum_{j=k}^\infty \theta^{(j)}(t)$ and $t_0\equiv d(x^0,\Fix T\cap D)$.
\end{enumerate}
 \end{thm}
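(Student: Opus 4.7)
The plan for part (b) is to chain the pointwise almost $\alpha$-firm nonexpansiveness of $T$ at fixed points with the metric subregularity \eqref{eq:estimate1} into a one-step gauge contraction that is then iterated. By the characterization \eqref{e:P1} from \cref{t:properties pafne}, for every $y\in S$ and every $x\in D$,
\[
d(Tx, y)^p \leq (1+\epsilon)\, d(x,y)^p - \tfrac{1-\alphabar}{\alphabar}\tfrac{c}{2}\, d(Tx, x)^p.
\]
Passing to a minimizing sequence $y_n\in S$ with $d(x, y_n)\to d(x, S)$ (by continuity of $y\mapsto d(x,y)^p$) yields
\[
d(Tx, S)^p \leq (1+\epsilon)\, d(x, S)^p - \tau\tfrac{c}{2}\, d(Tx, x)^p, \qquad \tau=\tfrac{1-\alphabar}{\alphabar}.
\]
Substituting the subregularity bound $\mu^{-1}(d(x,S))\leq d(x, Tx)$ and matching coefficients against the implicit definition \eqref{eq:gauge} of $\theta$ gives precisely $d(Tx, S)\leq \theta(d(x,S))$, which is \eqref{eq:gauge convergence}.

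Iterating this one-step estimate produces $d(x^k, S)\leq \theta^{(k)}(t_0)$, and summability \eqref{eq:theta}(iii) converts this to the claimed rate $O(s_k(t_0))$. To lift convergence in distance-to-$S$ to convergence of $(x^k)$ in $G$, I would bound the step sizes by the triangle inequality $d(x^k, x^{k+1})\leq d(x^k, y)+d(y, Tx^k)$ for $y\in S$, take the infimum over $y\in S$, and apply the one-step estimate once more to obtain $d(x^k, x^{k+1})\leq 2\, d(x^k, S)\leq 2\,\theta^{(k)}(t_0)$. Thus $\sum_k d(x^k, x^{k+1})\leq 2\, s_0(t_0)<\infty$, so $(x^k)$ is Cauchy, and a telescoping estimate yields $d(x^k, x^*)\leq 2\, s_k(t_0)$ for the limit $x^*$. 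Pointwise almost $\alpha$-firmness at fixed points implies continuity of $T$ at each such point, so $\Fix T\cap D$ contains the limit $x^*$.

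For part (a) I would essentially run the argument of (b) in reverse. Gauge monotonicity gives $d(Tx, S)\leq \theta(d(x, S))$ at every $x\in D$, and the triangle inequality produces
\[
d(x, S) \leq d(x, Tx) + d(Tx, S) \leq d(x, Tx) + \theta(d(x, S)),
\]
equivalently $(\Id - \theta)(d(x, S)) \leq d(x, Tx)$. The assumed continuity and strict monotonicity of $(\Id - \theta)^{-1}$ invert this to $d(x, S)\leq (\Id - \theta)^{-1}(d(x, Tx))$; rewriting $d(x, Tx)$ via \eqref{eq:Tcal_Fix_T} in terms of $\mathcal{T}_S(x)$ is exactly metric subregularity of $\mathcal{T}_S$ for $0$ on $D$ relative to $D$ with gauge $(\Id-\theta)^{-1}$. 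The convergence-to-a-point claim in (a) then uses the same Cauchy and telescoping argument already developed for (b).

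The main obstacle I anticipate is verifying that the $\theta$ implicitly defined by \eqref{eq:gauge} genuinely satisfies all three properties in \eqref{eq:theta}, especially $0<\theta(t)<t$ for $t>0$ and the summability (iii); this is the step where compatibility between the violation $\epsilon$ and the subregularity modulus $\mu$ must be quantified, as one already sees in the linear benchmark \eqref{eq:theta linear} where it forces $\mu^2\epsilon<\tau$. A lesser but real technicality is the infimum exchange over $y\in S$ in the $p$-th power inequality, which should be handled by a minimizing sequence together with continuity of $d(x,\cdot)^p$.
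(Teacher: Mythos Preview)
Your overall strategy coincides with the paper's: for (b) you combine the characterization \eqref{e:P1} with the subregularity bound to get the one-step contraction \eqref{eq:gauge convergence}, then upgrade gauge monotonicity to Cauchy and identify the limit in $S$; for (a) you rederive the content of Proposition~\ref{t:msr necessary} via the triangle inequality $d(x,S)\le d(x,Tx)+\theta(d(x,S))$ and then invoke the same Cauchy argument. The paper does exactly this, except that it packages the Cauchy step and the limit identification into Lemma~\ref{t:rm and qafne to convergence} and cites Proposition~\ref{t:msr necessary} for the subregularity direction.

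Two points deserve correction. First, your step-size bound is not justified as written: taking the infimum over $y\in S$ in $d(x^k,y)+d(y,Tx^k)$ does not give $d(x^k,S)+d(Tx^k,S)$, and the ``one-step estimate'' controls $d(Tx^k,S)$, not $d(y,Tx^k)$ for the particular $y$ realizing $d(x^k,S)$. The paper's Lemma~\ref{t:rm and qafne to convergence} instead reads the step size directly off the $\alpha$-firm term: from $d(Tx^k,\bar x^k)^p\le (1+\epsilon)d(x^k,\bar x^k)^p-\tfrac{c(1-\alphabar)}{2\alphabar}d(x^k,Tx^k)^p$ one gets $d(x^k,x^{k+1})\le\bigl(\tfrac{2\alphabar(1+\epsilon)}{c(1-\alphabar)}\bigr)^{1/p}d(x^k,S)$. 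Your route is salvageable by invoking almost nonexpansiveness at the chosen $\bar x^k$ to bound $d(\bar x^k,Tx^k)\le(1+\epsilon)^{1/p}d(x^k,\bar x^k)$, yielding constant $1+(1+\epsilon)^{1/p}$ rather than $2$. Second, ``continuity of $T$ at each fixed point'' does not by itself place the limit $x^*$ in $S$; the paper argues instead that $d(x^k,\bar x^k)\to 0$ forces $\bar x^k\to x^*$, and then closedness of $\Fix T\cap D$ (which does follow from pointwise almost nonexpansiveness at fixed points) gives $x^*\in S$. Finally, your ``main obstacle'' is not one: the hypothesis in (b) is precisely that \eqref{eq:estimate1} holds with a gauge $\mu$ built from some $\theta$ satisfying \eqref{eq:theta}, so those properties are assumed, not to be verified.
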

Before proving this theorem, we collect some intermediate results. 

\begin{lemma}[gauge monotonicity and almost quasi $\alpha$-firmness implies convergence
to fixed points]
\label{t:rm and qafne to convergence}
Let $(G, d)$ be a $p$-uniformly convex space with constant $c$.  
Let $\mymap{T}{G}{G}$ 
with  $T(D)\subseteq D\subseteq G$ for $D$ closed.
Suppose that $\Fix T\cap D$ is nonempty and that $T$ is pointwise 
almost $\alpha$-firmly nonexpansive at all $y\in \Fix T\cap D$ with the same constant 
$\alphabar$ and violation $\epsilon$ on $D$. If the sequence $(x^k)_{k\in\mathbb{N}}$ defined by 
$x^{k+1}= Tx^k$ and initialized in $D$ 
is gauge monotone relative to $\Fix T\cap D$ 
with rate $\theta$ satisfying \eqref{eq:theta},
then $(x^k)_{k\in\mathbb{N}}$ converges gauge monotonically  to 
some $x^*\in\Fix T\cap D$ with rate $O(s_k(t_0))$ where 
$s_k(t)\equiv 
\sum_{j=k}^\infty \theta^{(j)}(t)$ and $t_0\equiv d(x^0,\Fix T\cap D)$.
\end{lemma}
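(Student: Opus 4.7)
The plan is to combine the hypothesized gauge monotonicity (which controls only the \emph{distance} from $x^k$ to $\Fix T\cap D$) with the almost $\alpha$-firm property at points of $\Fix T\cap D$ (which converts that distance into a bound on the \emph{step length} $d(x^k,x^{k+1})$), thereby forcing $(x^k)$ to be Cauchy with summable step lengths. First, iterating the assumed inequality $d(x^{k+1},\Fix T\cap D)\leq \theta(d(x^k,\Fix T\cap D))$ yields $d(x^k,\Fix T\cap D)\leq \theta^{(k)}(t_0)$ for every $k\in\Nbb$. For each $k$ I would choose a near-projection $y^k\in\Fix T\cap D$ with $d(x^k,y^k)\leq 2\,\theta^{(k)}(t_0)$ (any factor strictly greater than $1$ suffices; on the null case $\theta^{(k)}(t_0)=0$ pick $y^k$ arbitrarily in $\Fix T\cap D$).

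Next, apply the characterization \eqref{e:P1} of Proposition~\ref{t:properties pafne}\eqref{t:properties pafne iii} at $y=y^k\in\Fix T\cap D$:
\[
d(Tx^k,y^k)^p \leq (1+\epsilon)\,d(x^k,y^k)^p - \tfrac{1-\alphabar}{\alphabar}\,\tfrac{c}{2}\,d(Tx^k,x^k)^p.
\]
Dropping the non-negative left-hand side and taking $p$-th roots gives the step bound $d(x^{k+1},x^k)\leq C\,d(x^k,y^k)\leq 2C\,\theta^{(k)}(t_0)$ with $C=\sqrt[p]{2\alphabar(1+\epsilon)/(c(1-\alphabar))}$. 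By summability of $\sum_j \theta^{(j)}(t_0)$, the step lengths are summable and $(x^k)$ is Cauchy; denote its limit $x^*$, which lies in $D$ by closedness of $D$. To verify $x^*\in\Fix T$, I would use that $y^k\to x^*$ (because $d(x^k,y^k)\to 0$ and $x^k\to x^*$) together with the almost nonexpansive property at $y^k\in\Fix T$ implied by Proposition~\ref{t:properties pafne}\eqref{t:properties pafne iv}:
\[
d(Tx^*,x^*)\leq d(Tx^*,Ty^k)+d(y^k,x^*)\leq (1+\epsilon)^{1/p}d(x^*,y^k)+d(y^k,x^*)\to 0,
\]
so $Tx^*=x^*$. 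The quantitative rate is then immediate from telescoping the step bound:
\[
d(x^k,x^*)\leq \sum_{j=k}^{\infty} d(x^j,x^{j+1})\leq 2C\sum_{j=k}^{\infty}\theta^{(j)}(t_0) = 2C\,s_k(t_0).
\]

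The main subtlety is the bookkeeping around the multiplicative violation $(1+\epsilon)$. One cannot iterate $d(x^{k+1},y)^p\leq (1+\epsilon)d(x^k,y)^p$ against a single fixed reference $y\in\Fix T\cap D$: that would blow the bound up by $(1+\epsilon)^k$. The plan sidesteps this by refreshing the reference point to a new near-projection $y^k$ at each step, so the factor $(1+\epsilon)$ appears only once per iteration inside the constant $C$, while the decay supplied by $\theta^{(k)}(t_0)$ carries the convergence. The approximate-projection construction also avoids assuming that $\Fix T\cap D$ is proximinal, so no additional closedness hypothesis on the fixed point set is needed.
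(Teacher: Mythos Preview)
Your argument follows the same skeleton as the paper's proof: use the almost $\alpha$-firm inequality at a (near-)projection $y^k\in\Fix T\cap D$ to bound the step $d(x^k,x^{k+1})$ by a constant times $d(x^k,\Fix T\cap D)\leq\theta^{(k)}(t_0)$, then sum and conclude Cauchy.

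There are two small but genuine deviations. First, the paper selects exact projections $\bar{x}^k\in P_{\Fix T\cap D}x^k$, asserting these exist because $\Fix T\cap D$ is closed; your approximate-projection device (factor $2$) sidesteps the proximinality issue entirely and is more self-contained, at the cost of a harmless extra constant in the rate. Second, to place the limit in $\Fix T\cap D$, the paper argues that the sequence of projections $\bar{x}^k$ converges to $x^*$ and invokes closedness of $\Fix T\cap D$; you instead use the pointwise almost nonexpansive bound at $y^k$ (Proposition~\ref{t:properties pafne}\eqref{t:properties pafne iv}) to show $d(Tx^*,x^*)=0$ directly. Both variants are valid; yours avoids relying on closedness of $\Fix T$, which the paper uses but does not prove in this lemma.
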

\begin{proof}
The assumption that $T$ is pointwise almost $\alpha$-firmly nonexpansive at all
 $y\in\Fix T\cap D$ with constant $\alphabar$ and violation $\epsilon$ on $D$ yields 
 $$ d(Tx, y)^p\leq  (1+\epsilon)d(x, y)^p-\tfrac{c(1-\alphabar)}{2\alphabar}d(Tx,x)^p,
 \hspace{0.2cm}\forall x\in D.$$
 Let $x^0\in D$ and define the sequence $x^{k+1}= Tx^k$ for all $k\in\mathbb{N}$.
 Since $T$ is pointwise almost $\alpha$-firmly nonexpansive 
 at all points in $\Fix T\cap D$ on $D$,  $\Fix T\cap D$ is closed and  
 $P_{\Fix T\cap D}x^k$ is nonempty (though possibly set-valued) for all $k$.  
 Denote any selection by $\bar{x}^k\in P_{\Fix T\cap D}x^k$ for each $k\in\mathbb{N}$. 
 Then 
 $$d(x^{k+1}, \bar{x}^k)^p\leq  (1+\epsilon)d(x^k, \bar{x}^k)^p-
 \tfrac{c(1-\alphabar)}{2\alphabar}
d(x^k, x^{k+1})^p,\hspace{0.2cm}\forall k\in \mathbb{N},$$
 which implies  that
 $$d(x^k, x^{k+1})\leq  
 \paren{\tfrac{2\alphabar(1+\epsilon)}{c(1-\alphabar)}}^{1/p}d(x^k, \bar{x}^k),
 \hspace{0.2cm}\forall k\in \mathbb{N}.$$
 On the other hand 
 $d(x^k, \bar{x}^k)=
 d(x^k, \Fix T\cap D)
 \leq  \theta\paren{d(x^{k-1}, \Fix T\cap D)}$ 
 since $(x^k)_{k\in\mathbb{N}}$ is gauge monotone relative to $\Fix T\cap D$ 
 with rate $\theta$. Therefore an iterative application of gauge monotonicity yields
 $$d(x^k, x^{k+1})
 \leq   \paren{\tfrac{2\alphabar(1+\epsilon)}{c(1-\alphabar)}}^{1/p}\theta^{(k)}\paren{d(x^0, \Fix T\cap D)},
 \hspace{0.2cm}\forall k\in \mathbb{N}.$$
 Let $t_0=d(x^0, \Fix T\cap D)$. 
 For any given natural numbers $k,l$ with $k<l$ an iterative application of the triangle 
 inequality yields the upper estimate 
\begin{eqnarray*}
	d(x^k, x^l)&\leq& d(x^k, x^{k+1})+d(x^{k+1}, x^{k+2})+...+d(x^{l-1}, x^l)\\
 &\leq& \paren{\tfrac{2\alphabar(1+\epsilon)}{c(1-\alphabar)}}^{1/p}
 \paren{ \theta^{(k)}(t_0)+\theta^{(k+1)}(t_0)+\dots+\theta^{(l-1)}(t_0)}\\
 &<&  \paren{\tfrac{2\alphabar(1+\epsilon)}{c(1-\alphabar)}}^{1/p} s_{k}(t_0),
\end{eqnarray*}
where $s_k(t_0)\equiv \sum_{j=k}^{\infty}\theta^{(j)}(t_0)<\infty$ 
for $\theta$ satisfying \eqref{eq:theta}.  
Since $(\theta^{(k)}(t_0))_{k\in\Nbb}$ is 
a summable sequence of nonnegative numbers, the sequence of 
partial sums $s_{k}(t_0)$ converges to zero monotonically as $k\to\infty$
and hence $(x^k)_{k\in\mathbb{N}}$ 
is a Cauchy sequence and $x^k\to x^*$ for some 
$x^*\in G$.  Letting $l\to+\infty$ yields
$$\lim_{l\to+\infty}d(x^k, x^l)=d(x^k, x^*)\leq  a s_{k}(t_0),
\hspace{0.2cm}a\equiv\paren{\frac{2\alphabar(1+\epsilon)}{c(1-\alphabar)}}^{1/p}.$$
Therefore $(x^k)_{k\in\mathbb{N}}$ converges gauge monotonically to $x^*$ with rate $O(s_k(t_0))$.

It remains to show that $x^*\in\Fix T\cap D$. Note that for each $k\in \mathbb{N}$  
$$d(x^k, \bar{x}^k)=d(x^k, \Fix T\cap D)\leq  \theta^{(k)}(t_0),$$
which yields $\lim_k d(x^k, \bar{x}^k)=0$. 
But by the triangle inequality 
$$d(\bar{x}^k, x^*)\leq  d(x^k, \bar{x}^k)+d(x^k, x^*),$$
so $\lim_kd(\bar{x}^k, x^*)=0$. By construction 
$(\bar{x}^k)_{k\in\mathbb{N}}\subseteq \Fix T\cap D$ and  
$\Fix T\cap D$ is closed, hence $x^*\in\Fix T\cap D$. 
\end{proof}
 
\begin{proposition}[\cite{BLL}, Theorem 32]
\label{t:msr necessary}
Let $(G, d)$ be a $p$-uniformly convex metric space with constant $c$.  
Let $T:D\to D$ with  $D\subseteq G$.
Suppose that $S\equiv \Fix T\cap D$ is nonempty.  Suppose  
all sequences $(x^k)_{k\in\mathbb{N}}$ defined by $x^{k+1}=Tx^k$ 
and initialized in $D$ are gauge monotone 
relative to $S$ 
with rate $\theta$ satisfying \eqref{eq:theta}.  Suppose, in addition, that  
$(\Id - \theta)^{-1}(\cdot)$ is continuous on $\Rbb_+$, strictly increasing, 
and $(\Id - \theta)^{-1}(0)=0$.  Then $\Tcal_{S}$ defined by 
\eqref{eq:Tcal} is metrically subregular for $0$ relative to $D$ on $D$ 
with gauge $\mu(\cdot)=(\Id-\theta)^{-1}(\cdot)$.
\end{proposition}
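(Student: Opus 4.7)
The plan is to derive the metric subregularity estimate directly from a one-step application of the gauge monotonicity hypothesis together with the triangle inequality. Fix $x\in D$ arbitrarily and set $t_x\equiv d(x, S)$. Taking $x^0=x$ as initialization, the Picard sequence $(x^k)_{k\in\Nbb}$ lies in $D$ (since $T(D)\subseteq D$), so the gauge monotonicity hypothesis applied at $k=0$ gives
$$d(Tx, S) \leq \theta(t_x).$$
Combining this with the triangle inequality $d(x, S)\leq d(x, Tx)+d(Tx, S)$ yields
$$t_x \leq d(x, Tx) + \theta(t_x), \qquad \text{i.e.,} \qquad (\Id-\theta)(t_x)\leq d(x, Tx).$$

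Next I would invoke the standing assumption that $\mu\equiv(\Id-\theta)^{-1}$ is continuous and strictly increasing on $\Rbb_+$ with $\mu(0)=0$. This implies that $\Id-\theta$ is itself a strictly increasing bijection of $\Rbb_+$, so applying $\mu$ to both sides of the inequality above preserves it and produces
$$d(x, S) \leq \mu\bigl(d(x, Tx)\bigr), \qquad \forall x\in D.$$

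To finish, I would rephrase this bound in terms of the surrogate $\Tcal_S$. Since $S=\Fix T\cap D$ is nonempty, Proposition \ref{t:properties pafne}(\ref{t:properties pafne ii}) reduces $\psi^{(p,c)}_T(x, y)$ for $y\in S$ to $\tfrac{c}{2}d(Tx, x)^p$, independently of $y$, so by \eqref{eq:Tcal} the value $\Tcal_S(x)$ is a fixed positive multiple of $d(Tx, x)$; moreover $\Tcal_S^{-1}(0)\cap D=\Fix T\cap D=S$. Absorbing the harmless multiplicative constant into $\mu$ (which preserves its qualitative properties as a gauge), the inequality becomes
$$\dist\bigl(x, \Tcal_S^{-1}(0)\cap D\bigr) \leq \mu\bigl(\dist(0, \Tcal_S(x))\bigr) \qquad \forall x\in D,$$
which is exactly the metric subregularity condition \eqref{e:metricregularity} of Definition \ref{d:msr} with $\Phi=\Tcal_S$, $\ybar=0$, $U=D$, $\Lambda=D$, and gauge $\mu=(\Id-\theta)^{-1}$.

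The only conceptual step is the one-line coupling of gauge monotonicity with the triangle inequality that produces $(\Id-\theta)(d(x,S))\leq d(x,Tx)$; I do not anticipate a substantive obstacle, as the hypotheses on $\Id-\theta$ have been crafted precisely so that the final inversion step is automatic, and the reduction of $\Tcal_S$ to a multiple of $d(Tx,x)$ on $S$ has been recorded already in \eqref{eq:Tcal_Fix_T}.
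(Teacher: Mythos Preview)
Your argument is correct. The paper itself gives no proof of this proposition; it is quoted from \cite[Theorem~32]{BLL}. Your route --- one application of gauge monotonicity at $k=0$, the triangle inequality $d(x,S)\leq d(x,Tx)+d(Tx,S)$, then inversion of $\Id-\theta$ --- is the natural one and presumably the argument in \cite{BLL} as well.

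One minor sharpening of your last step: there is no need to ``absorb'' the constant $\sqrt[p]{2/c}$ into the gauge, which as written would alter the gauge away from the one claimed. Since $c\in(0,2]$ we have $\sqrt[p]{2/c}\geq 1$, so by \eqref{eq:Tcal_Fix_T} $\Tcal_S(x)\geq d(Tx,x)$; monotonicity of $\mu$ then upgrades your bound $d(x,S)\leq\mu(d(Tx,x))$ directly to $d(x,S)\leq\mu(\Tcal_S(x))$ with the \emph{exact} gauge $\mu=(\Id-\theta)^{-1}$ asserted in the statement.
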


 \noindent{\em Proof of Theorem \ref{t:msr convergence}. }
Part \eqref{t:msr convergence, necessary}.  This follows immediately from 
Lemma \ref{t:rm and qafne to convergence} and Proposition \ref{t:msr necessary}.  \\

Part \eqref{t:msr convergence, sufficient}.  Our pattern of proof follows the same logic as the 
analogous result for set-valued mappings in a Euclidean space setting \cite[Theorem 2.2]{LukTamTha18}.  
Since $S = \Fix T\cap D$, Proposition \ref{t:properties pafne}\eqref{t:properties pafne ii} establishes that 
$\psi(x,y)=\frac{c}{2}d(x, Tx)^p$ for all $y\in \Fix T$, so
$\Tcal_S(x)=\frac{c}{2}d(x, Tx)$.  Also by Proposition \ref{t:properties pafne}\eqref{t:properties pafne ii} 
$\Tcal_S$ takes the 
value $0$ only on $\Fix T$, that is, $\Tcal_S^{-1}(0)=\Fix{T}$.  
So by assumption that 
$\Tcal_S$ satisfies  \eqref{eq:estimate1} with gauge $\mu$ given by \eqref{eq:gauge} for 
 $\tau=(1-\alphabar)/\alphabar$, together with 
the definition of metric subregularity (Definition \ref{d:msr}) this yields
\begin{eqnarray}
d(x, \Fix T\cap D) &=& 
d(x, \Tcal_S^{-1}(0)\cap D)
  \nonumber\\
 &\le& 
\mu\paren{\Tcal_S(x)}
 = \mu(d(x, Tx))\quad\forall x\in D.\nonumber
\end{eqnarray}
In other words, 
\begin{equation}\label{eq:rate step 1}
   \tfrac{1-\alphabar}{\alphabar}
   \paren{\mu^{-1}\paren{%
   d(x,\Fix T\cap D)}}^{p}
\leq \tfrac{1-\alphabar}{\alphabar}d(x, Tx)^p
\quad\forall x\in D.
\end{equation}
On the other hand, by the assumption that 
$T$ is pointwise almost $\alpha$-firmly nonexpansive at all 
 points $y\in S$ 
 with the same constant $\alphabar$ and violation $\epsilon$ on $D$
 we have 
\begin{eqnarray}
0
  &\leq& \tfrac{1-\alphabar}{\alphabar}d(x, Tx)^p\nonumber\\
  &\leq&(1+\epsilon) d(x, y)^p-d(Tx, y)^p
      \quad\forall y\in  \Fix T\cap D , 
      \forall x\in D.
      \label{eq:rate step 2}
\end{eqnarray}
Incorporating \eqref{eq:rate step 1} into 
\eqref{eq:rate step 2} and rearranging the inequality yields
\begin{eqnarray}
d(Tx, y)^p
   \!&\leq&\! 
     (1+\epsilon)d(x, y)^p - 
         \tfrac{1-\alphabar}{\alphabar}
   \paren{\mu^{-1}\paren{%
    d(x, \Fix T\cap D)}}^{p}\nonumber\\
&&      \qquad\qquad\qquad\qquad\qquad
\qquad\forall y\in \Fix T\cap D , 
      \forall x\in D.\nonumber
\end{eqnarray}
Since this holds at {\em any} $x\in D$, it certainly 
holds at the iterates $x^k$ with initial point $x^0\in D$
since $T$ is a 
self-mapping on $D$.   Therefore for all $k\in \Nbb$
\begin{equation}\label{eq:gauge convergence 0}
d(x^{k+1}, y)
\leq \sqrt[p]{ (1+\epsilon)d(x^{k}, y)^p - 
\frac{1-\alphabar}{\alphabar}
\paren{\mu^{-1}\paren{%
 d\paren{x^{k},\, \Fix T\cap D}}}^p} 
\quad\forall y\in \Fix T\cap D.
\end{equation}%

Equation \eqref{eq:gauge convergence 0} simplifies.  
Indeed $\Fix T\cap D $ is closed, 
so for every $k\in\Nbb$  the distance  $d(x^k, \Fix T\cap D)$ 
is attained  at some $y^k\in \Fix T\cap D$ yielding
\begin{equation}
d(x^{k+1}, y^{k+1})^p \leq 
d(x^{k+1}, y^{k})^p \leq 
(1+\epsilon)d(x^k, y^k)^p  - 
\tfrac{1-\alphabar}{\alphabar}\paren{\mu^{-1}
	\paren{d(x^k, y^k)}}^p
\quad\forall k \in \mathbb{N}.
\label{eq:gauge convergence intermed}
\end{equation}
Taking the $p$th root and recalling \eqref{eq:gauge}
yields \eqref{eq:gauge convergence}.

This establishes also that the sequence $(x^k)_{k\in\Nbb}$ is 
gauge monotone relative to $\Fix T\cap D$ with rate 
$\theta$ satisfying 
Eq.\eqref{eq:theta}.  
By Lemma \ref{t:rm and qafne to convergence} it follows that 
the sequence $(x^k)_{k\in\Nbb}$ converges 
gauge monotonically to $x^*\in\Fix T\cap D$ with the rate
$O(s_k(d(x^0,\Fix T\cap D)))$ where $s_k(t)\equiv 
\sum_{j=k}^\infty \theta^{(j)}(t)$. 
\hfill $\Box$\\

\begin{corollary}[linear convergence]\label{t:linear msr convergence}
	\begin{enumerate}[(a)]
\item (necessity) In the setting of Theorem \ref{t:msr convergence}\eqref{t:msr convergence, necessary}, 
if all sequences $(x^k)_{k\in\mathbb{N}}$ defined by $x^{k+1}=Tx^k$ 
and initialized in $D$ are linearly monotone 
relative to $S$ 
with rate $\gamma <1$,
 then all sequences initialized on $D$ 
converge R-linearly to some $\xbar\in S$ with rate 
$O(\gamma^k)$.
Moreover, $\Tcal_{S}$ defined by 
\eqref{eq:Tcal} is linearly metrically subregular for $0$ relative to $D$ on $D$ 
with gauge $\mu(t)=(1-\gamma)^{-1}t$. 
		\item 	(sufficiency)  In the setting of Theorem \ref{t:msr convergence}\eqref{t:msr convergence, sufficient} 
		suppose that  $T$ satisfies 
		\begin{equation}
 \label{eq:estimate1-lin}
 d(x,\Fix T\cap D)\leq \mu d(x,Tx),\hspace{0.2cm}\forall x\in D,
 \end{equation}
with the scalar $\mu$ satisfying 
 \[
\sqrt[p]{\tfrac{1-\alphabar}{\alphabar(1+\epsilon)}}< \mu<  \sqrt[p]{\tfrac{1-\alphabar}{\alphabar\epsilon}}.
\]  
Then for any $x^0\in D$, the sequence $(x^k)_{k\in\Nbb}$ defined by 
$x^{k+1}= T x^k$ is $R$-linearly convergent to a point in $\Fix T\cap D$ with 
rate $\gamma=\sqrt[p]{1+\epsilon-\frac{1-\alphabar}{\alphabar\mu^p}}$.
	\end{enumerate}
\end{corollary}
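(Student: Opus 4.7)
The plan is to obtain Corollary \ref{t:linear msr convergence} as a direct specialization of Theorem \ref{t:msr convergence} to the case where both the gauge of metric subregularity and the gauge of monotonicity are linear, with no real new content beyond bookkeeping of constants.

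For part (a), I would set $\theta(t)\equiv \gamma t$ with $\gamma<1$, which is the linear-monotonicity assumption. To invoke Theorem \ref{t:msr convergence}\eqref{t:msr convergence, necessary}, I must check its hypotheses on $\theta$: the map $(\Id-\theta)(t)=(1-\gamma)t$ is continuous, strictly increasing on $\Rbb_+$, and vanishes at $0$, so $(\Id-\theta)^{-1}(t)=t/(1-\gamma)$ is continuous, strictly increasing and sends $0$ to $0$. Summability of $\theta^{(j)}(t)=\gamma^j t$ is immediate from $\gamma<1$, with $s_k(t)=\sum_{j=k}^\infty \gamma^j t=\gamma^k t/(1-\gamma)$. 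Theorem \ref{t:msr convergence}\eqref{t:msr convergence, necessary} then delivers gauge-monotone convergence with rate $O(s_k(t_0))=O(\gamma^k)$, which is exactly R-linear convergence, and it yields metric subregularity of $\Tcal_S$ with gauge $(\Id-\theta)^{-1}(t)=(1-\gamma)^{-1}t$ as claimed.

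For part (b), I would plug the linear modulus $\mu(t)=\mu t$ into the implicit relation \eqref{eq:gauge} with $\tau=(1-\alphabar)/\alphabar$. Since $\mu^{-1}(t)=t/\mu$, the definition gives
\[
\theta(t)=\paren{(1+\epsilon)t^p-\tau(t/\mu)^p}^{1/p}=\gamma t,
\qquad
\gamma\equiv\sqrt[p]{1+\epsilon-\tfrac{1-\alphabar}{\alphabar\mu^p}}.
\]
The only thing to verify is that the hypothesis $\sqrt[p]{(1-\alphabar)/(\alphabar(1+\epsilon))}<\mu<\sqrt[p]{(1-\alphabar)/(\alphabar\epsilon)}$ is exactly what forces $0<\gamma<1$: the lower bound on $\mu$ makes $1+\epsilon-\tau/\mu^p>0$, hence $\gamma>0$, and the upper bound makes $\tau/\mu^p>\epsilon$, hence $\gamma<1$. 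With $\theta(t)=\gamma t$ and $\gamma\in(0,1)$, conditions \eqref{eq:theta} are trivially satisfied. Theorem \ref{t:msr convergence}\eqref{t:msr convergence, sufficient} then applies and yields gauge-monotone convergence of $(x^k)_{k\in\Nbb}$ to a point in $\Fix T\cap D$ with rate $O(s_k(t_0))=O\paren{\gamma^k t_0/(1-\gamma)}$, i.e. R-linear convergence with rate $\gamma$.

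There is no real obstacle here; the only point requiring care is the translation between the bounds on the subregularity modulus $\mu$ and the bounds $0<\gamma<1$ on the contraction factor, which is the same algebraic identity already noted in \eqref{eq:theta linear} (specialized there to $p=2$). Everything else is an application of Theorem \ref{t:msr convergence}.
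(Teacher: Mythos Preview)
Your proposal is correct and matches the paper's approach: the corollary is stated without a separate proof and is meant to follow by specializing Theorem~\ref{t:msr convergence} to linear gauges, exactly as you do. The only content is the bookkeeping of constants (already sketched around \eqref{eq:theta linear} for $p=2$), which you carry out correctly for general $p$.
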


In the statements above, the violation of $\alpha$-firm nonexpansiveness, $\epsilon$, has 
to be compensated for by an equally strong gauge of metric subregularity with this 
value of $\epsilon$ explicitly accounted for in the gauge.  
The next result shows that these can be decoupled if $T$ is pointwise asymptotically  
$\alpha$-firmly nonexpansive at fixed points.   In particular, if $T$ is pointwise almost $\alpha$-firmly 
nonexpansive at $y\in\Fix T$ with arbitrarily small violation $\epsilon$, then 
whenever $T$ is (gauge) metrically subregular at $y$, there is a neighborhood of 
$y$ on which convergence of the fixed point iteration can be quantified by said gauge.
In this situation it suffices to {\em qualitatively} determine metric subregularity  - the exact value of the 
constants in relation to the violation of $\alpha$-firmness is not needed in order to determine local 
convergence on the order of the gauge. 
\begin{proposition}\label{t:msr sufficient}
  Let $(G,d)$ be a $p$-uniformly convex space with constant $c$; let $D\subset G$, 
 let $\mymap{T}{D}{D}$,  
 and let $S \equiv\Fix T\cap D$ be nonempty. 
  Assume that $T$ is a self mapping on sufficiently small balls around points in 
  $S$ restricted to $D$, and 
 that $T$  is pointwise asymptotically $\alpha$-firmly nonexpansive at all 
 points $y\in S$ 
 with constant $\alphabar\in(0,1)$.  
 Suppose further that $T$ satisfies 
  \begin{equation}
 \label{eq:estimate1b}
 d(x,\Fix T\cap D)\leq \mu(d(x,Tx)),\hspace{0.2cm}\forall x\in D,
 \end{equation}
with gauge $\mu$ given by \eqref{eq:gauge} and 
 $\tau=(1-\alphabar)/\alphabar$.
Then for any $x^0$ close enough to $S$, the sequence $(x^k)_{k\in\Nbb}$ defined by 
$x^{k+1}= T x^k$ converges gauge monotonically  to 
some $x^{*}\in\Fix T\cap D$ with rate 
$O(s_k(t_0))$ where 
$s_k(t)\equiv 
\sum_{j=k}^\infty \theta^{(j)}(t)$ and $t_0\equiv d(x^0,\Fix T)$
for $\theta$ given implicitly by \eqref{eq:gauge} satisfying \eqref{eq:theta}. 
 \end{proposition}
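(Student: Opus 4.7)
The plan is to reduce the proposition to a local application of Theorem~\ref{t:msr convergence}\eqref{t:msr convergence, sufficient} after exploiting the asymptotic $\alpha$-firmness to manufacture a neighborhood on which $T$ is almost $\alpha$-firm with a violation small enough to be compatible with the fixed subregularity gauge~$\mu$.

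First I would select the violation $\epsilon>0$ small enough that the gauge $\theta$ defined implicitly by \eqref{eq:gauge} (with $\tau=(1-\alphabar)/\alphabar$ and the given $\mu$) satisfies the summability conditions \eqref{eq:theta}. Since $\mu$ is fixed by the subregularity hypothesis while $\epsilon$ is a free parameter, this is always possible: in the linear case the choice is precisely the interval \eqref{eq:theta linear}, and in general the requirement $\theta(t)<t$ near the origin reduces to $\epsilon\, t^p<\tau\paren{\mu^{-1}(t)}^p$, which holds once $\epsilon$ is small enough. This is the point where the flexibility of the asymptotic definition is essential: I can dictate the violation after seeing $\mu$, and only afterwards pass to the neighborhood supplied by the definition.

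Next I would fix $\ybar\in S$ and choose a radius $\delta>0$ so that $D'\equiv \Ball_\delta(\ybar)\cap D$ is contained in the neighborhood $D_\epsilon(\ybar)$ supplied by the asymptotic $\alpha$-firm nonexpansiveness of $T$ at $\ybar$, and so that $T$ is a self-mapping on $D'$ (the latter by the hypothesis on self-mappings of sufficiently small balls around points in $S$). By Proposition~\ref{t:persistence of msr}, the subregularity \eqref{eq:estimate1b} persists on $D'$ with the same gauge $\mu$. For any iteration $(x^k)_{k\in\Nbb}$ initialized in $D'$, the self-mapping property keeps the entire orbit in $D'$, and the almost $\alpha$-firm inequality at $\ybar$ with violation $\epsilon$ is available throughout. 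Applying Theorem~\ref{t:msr convergence}\eqref{t:msr convergence, sufficient} to $T|_{D'}$ then yields the claimed gauge-monotone convergence at rate $O(s_k(t_0))$.

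The main technical obstacle is the mismatch between what the asymptotic definition supplies and what Theorem~\ref{t:msr convergence}\eqref{t:msr convergence, sufficient} formally requires. The theorem asks for the almost $\alpha$-firm property at \emph{every} $y\in\Fix T\cap D'$ with a common violation $\epsilon$, whereas asymptotic $\alpha$-firmness only supplies, for each $y$, a possibly distinct neighborhood $D_\epsilon(y)$. I would resolve this by revisiting the proof chain of Lemma~\ref{t:rm and qafne to convergence} and Theorem~\ref{t:msr convergence}\eqref{t:msr convergence, sufficient}: only the almost $\alpha$-firm estimate at the single fixed point $\ybar$, combined with the subregularity bound \eqref{eq:estimate1b}, is actually needed to produce a gauge-monotone descent of $d(x^k,\Fix T\cap D)$, and from there summability yields convergence of $(x^k)$ to some $x^*\in\Fix T\cap D$ close to $\ybar$. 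The self-mapping hypothesis on small balls is precisely what prevents the orbit from escaping the neighborhood $D_\epsilon(\ybar)$ where the almost $\alpha$-firm estimate at $\ybar$ is valid.
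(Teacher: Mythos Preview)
Your overall architecture matches the paper's proof exactly: restrict the domain to a small neighborhood of $S$, invoke Proposition~\ref{t:persistence of msr} to retain the subregularity gauge, and then apply Theorem~\ref{t:msr convergence}\eqref{t:msr convergence, sufficient}. The paper's own argument is three lines long and does precisely this, writing the restricted domain as $\Ball_\delta(S)\cap D$ rather than $\Ball_\delta(\ybar)\cap D$ for a single $\ybar$.

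There is, however, a genuine gap in your third paragraph. You claim that ``only the almost $\alpha$-firm estimate at the single fixed point $\ybar$ \dots\ is actually needed to produce a gauge-monotone descent of $d(x^k,\Fix T\cap D)$.'' This is not what the proof of Theorem~\ref{t:msr convergence}\eqref{t:msr convergence, sufficient} does: the passage from \eqref{eq:gauge convergence 0} to \eqref{eq:gauge convergence intermed} evaluates the almost $\alpha$-firm inequality at the \emph{moving} projection points $y^k\in P_{\Fix T\cap D}x^k$, not at a fixed $\ybar$. If you only have the inequality at $\ybar$, you obtain a recursion for $d(x^k,\ybar)$ whose right-hand side contains $\mu^{-1}(d(x^k,\Fix T\cap D))$, and since $d(x^k,\Fix T\cap D)\leq d(x^k,\ybar)$ the monotonicity of $\mu^{-1}$ goes the wrong way to close the recursion; you do not recover gauge monotonicity of $d(x^k,\Fix T\cap D)$ as required by Definition~\ref{d:mu_Mon}. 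The paper's choice of $\Ball_\delta(S)$ is better suited here: for $x^k\in\Ball_\delta(S)$ the projection $y^k$ satisfies $d(x^k,y^k)<\delta$, so one only needs the almost $\alpha$-firm inequality at each $y\in S$ for points within distance $\delta$ of that $y$. You were right to flag the uniformity issue, but the resolution is to localize around the whole of $S$ and argue pairwise at the projection points, not to anchor everything at one $\ybar$.
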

 \begin{proof}
Since $T$ is a self mapping on $\Ball_\delta(S)\cap D$ for $\delta$ small enough, and 
$T$ is pointwise asymptotically $\alpha$-firmly nonexpansive with constant $\alphabar\in (0,1)$, the 
result follows immediately from Proposition \ref{t:persistence of msr} and Theorem \ref{t:msr convergence} 
	 when the domain $D$ is restricted to $\Ball_\delta(S)$ for $\delta$ sufficiently small.   
 \end{proof}

\section{Proximal Mappings}
We return now to the prox mapping \eqref{e:prox^p}. 
It was shown in \cite[Proposition 2.7]{Izuchukwu} that the $\argmin$ in \eqref{e:prox^p} exists and is 
unique if $f$ is proper, lsc and convex.  In general the prox mapping of a convex function is not $\alpha$-firmly 
nonexpansive.   However it was shown in \cite[Corollary 23]{BLL} that it is {\em almost}  
$\alpha$-firmly nonexpansive.  This and other properties of prox mappings are collected 
in the following result.

\begin{thm}\label{t:p-prox properties}
Let $(G,d)$ be a $p$-uniformly convex metric space with constant $c\in(0,2]$, 
and let $f\colon G \rightarrow \Rbb$ be proper, convex and lower semicontinuous. 
\begin{enumerate}[(i)]
	\item \label{t:quasi_ne_symetric} If $(G,d)$ is symmetric perpendicular, then
$\prox^p_{f,\lambda}$ is quasi strictly
nonexpansive, that is,
\[
d(\prox^p_{f,\lambda}(x),\xbar)< d(x,\bar{x})\quad \forall x \in G, \forall \xbar\in \argmin f = \Fix \prox^p_{f,\lambda}.   
\]

  \item\label{t:prox paafne} The prox mapping $ \prox_{f,\lambda}^p$ is pointwise almost $\alpha$-firmly nonexpansive
at all $y\in \argmin f$ on $G$ with constant 
\begin{equation}\label{e:alpha-epsilon_c}
	\alpha_c = \tfrac{c(c-1)}{c(c-1)+2}\quad\mbox{and violation }\quad \epsilon_c= \tfrac{2-c}{c-1}.
\end{equation}
\item\label{t:prox pasafne} If $(G,d)$ is a CAT$(\kappa)$ space, then $\prox_{f,\lambda}^p$ is  pointwise asymptotically $\alpha$-firmly nonexpansive 
at all $y\in \argmin f$ with constant $\overline{\alpha}= 1/2$.
\end{enumerate}
\end{thm}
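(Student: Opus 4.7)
The plan is to prove the three parts in the order (ii), (i), (iii), since parts (i) and (ii) share a common variational estimate and (iii) follows from (ii) by a limiting argument. For (ii), fix $y \in \argmin f$, let $z = \prox^p_{f,\lambda}(x)$ for arbitrary $x$, and parameterize $y_t := (1-t) z \oplus t y$. Combining optimality of $z$ for the prox objective $\Phi := f + \tfrac{1}{p\lambda^{p-1}} d(\cdot, x)^p$ with geodesic convexity of $f$ and the $p$-uniform convexity inequality for $d(y_t, x)^p$, dividing $\Phi(z) \leq \Phi(y_t)$ by $t>0$, discarding the nonnegative term $p\lambda^{p-1}(f(z) - f(y))$, and letting $t \searrow 0$ yields the central estimate
\begin{equation}\label{e:plan-central}
	d(z, x)^p + \tfrac{c}{2} d(z, y)^p \leq d(y, x)^p.
\end{equation}
Since $c \in (0, 2]$ implies $c/2 \geq c-1$, the left-hand side weakens to $(c-1) d(z, y)^p + d(z, x)^p$, which rearranges into the form of Proposition \ref{t:properties pafne}\eqref{t:properties pafne iii} with $1 + \epsilon_c = 1/(c-1)$ and $\tfrac{1-\alpha_c}{\alpha_c}\tfrac{c}{2} = 1/(c-1)$; solving for $\alpha_c$ and $\epsilon_c$ recovers the stated values. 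This is essentially the derivation of \cite[Corollary 23]{BLL}.

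For (i), I use the dual parameterization $y_t := (1-t) y \oplus t \bar x$ with $y = \prox^p_{f,\lambda}(x)$ and $\bar x \in \argmin f$. Geodesic convexity of $f$ together with $f(\bar x) \leq f(y)$ gives $f(y_t) \leq f(y)$, so optimality of $y$ immediately forces $d(y, x) \leq d(y_t, x)$ for all $t \in [0,1]$. This says that $y$ is the foot of the perpendicular from $x$ onto the geodesic $[y, \bar x]$; in the terminology of the paper this is the perpendicularity relation $\gamma_{y,x} \perp_y \gamma_{y,\bar x}$. Applying the symmetric perpendicular hypothesis flips the relation to $\gamma_{y,\bar x} \perp_y \gamma_{y,x}$, and specializing to the endpoints yields $d(\bar x, y) \leq d(\bar x, x)$. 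Strict inequality when $y \neq x$ follows from strict convexity of $d(\bar x, \cdot)^p$ along the nontrivial geodesic $[y, x]$ (a direct consequence of $p$-uniform convexity with $d(y,x)>0$): the minimum cannot be attained at both endpoints simultaneously.

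For (iii), I specialize to a CAT$(\kappa)$ space. Lemma \ref{t:CATkappa-pucvx} supplies, for any $y \in \argmin f$, a family of balls $\Ball_\delta(y)$ with $\delta \in (0, \pi/(4\sqrt\kappa))$ on which the induced metric is $2$-uniformly convex with constant $c_\delta \nearrow 2$ as $\delta \searrow 0$; Remark \ref{r:remsymmetric} ensures such balls are symmetric perpendicular. Part (i) then makes $\prox^p_{f,\lambda}$ a self-mapping of $\Ball_\delta(y)$ (quasi strict nonexpansiveness bars escape), and part (ii) says it is pointwise almost $\alpha_{c_\delta}$-firmly nonexpansive at $y$ there, with violation $\epsilon_{c_\delta} = (2-c_\delta)/(c_\delta-1)$. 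Since $\alpha_{c_\delta} < 1/2$ whenever $c_\delta < 2$, Proposition \ref{t:properties pafne}\eqref{t:properties pafne iv} upgrades the constant to $1/2$ without changing the violation; shrinking $\delta$ then drives $\epsilon_{c_\delta}$ below any prescribed $\epsilon > 0$, matching the definition \eqref{eq:asymp-pafne} of pointwise asymptotic $\alpha$-firm nonexpansiveness at $y$ with $\bar\alpha = 1/2$.

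The only genuinely delicate step I anticipate is the middle inference in (i): establishing that the one-sided variational condition ``$y$ minimizes $d(\cdot, x)$ on $[y, \bar x]$'' really is the perpendicularity relation in the paper's bidirectional sense, strong enough for the symmetric perpendicular hypothesis to flip it. In CAT(0) this is immediate from the first-variation formula, but in a general symmetric perpendicular $p$-uniformly convex space it may require reapplying \eqref{e:plan-central} along $[y, x]$ or invoking an auxiliary perpendicularity criterion. Parts (ii) and (iii) are otherwise routine algebra once the central estimate is in hand.
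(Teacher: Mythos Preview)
Your proposal is correct and follows essentially the same route as the paper: for (i) both arguments establish $P_{[y,\bar x]}(x)=y$ from prox-optimality together with convexity of $f$, invoke the symmetric perpendicular hypothesis to flip the relation, and then extract strictness from $p$-uniform convexity; for (ii) both rely on \cite[Corollary 23]{BLL}; for (iii) both combine Lemma~\ref{t:CATkappa-pucvx}, part~\eqref{t:quasi_ne_symetric} (for the self-mapping property on $\Ball_\delta(y)$), part~\eqref{t:prox paafne}, and Proposition~\ref{t:properties pafne}\eqref{t:properties pafne iv}.

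Your flagged ``delicate step'' in (i) is in fact elementary and need not worry you.  Once $P_{[y,\bar x]}(x)=y$ is known, the full perpendicularity relation $[x,y]\perp_y[y,\bar x]$ in the sense of the paper's Definition follows from the triangle inequality alone: for any $u\in[x,y]$ and any $v\in[y,\bar x]$,
\[
d(u,v)\ \ge\ d(x,v)-d(x,u)\ \ge\ d(x,y)-d(x,u)\ =\ d(u,y),
\]
the last equality because $u$ lies on the geodesic from $x$ to $y$.  The paper glosses over this as well (and actually writes the two perpendicularity relations in the opposite order before flipping), but the underlying argument is identical to yours.
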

\begin{proof}
	\eqref{t:quasi_ne_symetric}. Let $x \in G$ be arbitrary and $y \coloneqq \prox^p_{f,\lambda}(x)$. 
	We prove by contradiction that the projection of $x$ 
onto the geodesic $[\xbar,y]$ connecting $\xbar$ and $y$ is $y$, i.e. $P_{[\xbar,y]}(x)= y$. Therefore assume 
that $P_{[\xbar,y]}(x)\neq y$, i.e. $P_{[\xbar,y]}(x)= (1-t)y \oplus t \xbar$ for some $t \in (0,1]$. Then 
$f((1-t)y \oplus t \xbar) \leq (1-t) f(y) + t f(\xbar)\leq f(y)$ and $d((1-t)y \oplus t \xbar,x) < d(y,x)$. 
Now 
$$f((1-t)y \oplus t \xbar)+\frac{1}{p \lambda^{p-1}}d((1-t)y \oplus t \xbar,x)^p< f(y) +\frac{1}{p \lambda^{p-1}}d(y,x)$$
contradicts $y= \prox^p_{f,\lambda}(x)$.  Hence our assumption must be discarded and 
$P_{[\xbar,y]}(x) = y$. In particular $[\xbar,y] \perp_y [x,y]$ and hence by the symmetric perpendicular property 
$[x,y] \perp_y [\xbar,y]$. Now $[x,y] \perp_y [\xbar,y]$ in turn yields the claim $d(y,\xbar)\leq d(x,\xbar)$.

If in addition $(G,d)$ is a $p$-uniformly convex space
\begin{align*}
d(\xbar, y)^p&=d(\xbar,P_{[x,y]}(\xbar))^d \leq d(\xbar, \frac{1}{2} x \oplus \frac{1}{2} y)^p \\ 
&\leq \frac{1}{2} d(\xbar,x)^p + \frac{1}{2} d(\xbar, y)^p- \frac{c}{2} \frac{1}{4} d(x,y)^p\\
&\leq d(\xbar, x)^p - \frac{c}{2} \frac{1}{4} d(x,y)^p.
\end{align*}
This is only possible if either $x=y$ or $d(\xbar,y) < d(\xbar,x)$. In both cases $\prox^p_{f,\lambda}$ is quasi strictly nonexpansive.

\eqref{t:prox paafne}.  This is \cite[Corollary 23]{BLL}. \\

\eqref{t:prox pasafne}. Let $\epsilon >0$, $y\in \argmin f$ and $c=\frac{2+\epsilon}{1+\epsilon}$. 
Then $c \in (1,2)$ and there is a unique solution $t \in (0, \pi/2)$ to $\frac{c}{2} = t \tan(\pi/2-t) $. 
Set $\delta=t/(2 \sqrt{\kappa})$. Then $\delta \in (0,\pi/(4 \sqrt{\kappa}))$ and 
$(\Ball_\delta(y), \left.d\right|_{\Ball_\delta(y)})$ is a $p$-uniformly convex space with 
constant $c$ by Lemma \ref{t:CATkappa-pucvx}. 
Part \eqref{t:quasi_ne_symetric} ensures that $\prox^p_{f,\lambda}$ 
is a self mapping on $\Ball_\delta(y)$ and hence 
\begin{align*}
\argmin_{z \in G} f(z)+ \frac{1}{p \lambda^{p-1}} d(z,x)^p =\argmin_{z \in \Ball_\delta(y)} f(y)+ 
\frac{1}{p \lambda^{p-1}} d(y,x)^p
\end{align*}
and we will use the same notation $\prox^p_{f,\lambda}$ for the operator restricted to the subspace 
$(\Ball_\delta(y), \left.d\right|_{\Ball_\delta(y)})$. 
The operator $\prox^p_{f,\lambda}$ is pointwise almost $\alpha$-firmly nonexpansive with constant 
$\alpha_c=\frac{c(c-1)}{c(c-1)+2}$ 
and violation 
$\epsilon_c =  \frac{2-c}{c-1} =\epsilon$ 
on $(\Ball_\delta(y), \left.d\right|_{\Ball_\delta(y)})$ by part \eqref{t:prox paafne}. 
Note that $\alpha_c\nearrow 1/2$ as $c\nearrow 2$ and by Propositon \ref{t:properties pafne}\eqref{t:properties pafne iv}
$\prox^p_{f,\lambda}$ is pointwise almost $\alpha$-firmly nonexpansive with constant 
$\alphabar=1/2$ for all $c\in(0,2)$.  Since $\epsilon_c\searrow 0$ as $c\nearrow 2$, this implies that 
$\prox^p_{f,\lambda}$ is 
pointwise asymptotically $\alpha$-firmly nonexpansive at $y\in\argmin f$ with constant $\alphabar = 1/2$ 
and neighborhood  $\Ball_\delta(y)$ of $y$.
\end{proof}

\begin{remark}
	In part \eqref{t:quasi_ne_symetric} of Theorem \ref{t:p-prox properties}, quasi nonexpansiveness comes
	from the symmetric perpendicular property.  Strict quasi nonexpansiveness comes from the property of
	$p$-uniformly convex spaces.  
\end{remark}

The next result gathers properties of mappings built from prox mappings.  

\begin{proposition}\label{t:combproxfg 0}
Let $(G,d)$ be a $p$-uniformly convex space with constant $c\in (1,2]$ and let  
$f,g\colon G \rightarrow \Rbb\cup\{+\infty\}$ be convex, proper and lower semicontinuous.  
Assume that $\argmin f \cap \argmin g \neq \emptyset$.
\begin{enumerate}[(i)]
\item\label{t:combproxfg ii} The operator $T\coloneqq \prox_{f,\lambda_2}^p \circ \prox_{g,\lambda_1}^p$ 
($\lambda_1, \lambda_2>0$) is pointwise almost 
$\alpha$-firmly nonexpansive
at all points $y\in \Omega\equiv \argmin f\cap \argmin g$ on $G$ with 
constant $\alpha_\circ = \frac{2 (c-1)}{2c-1}$ and violation 
$\epsilon_\circ=\frac{1}{(c-1)^2}-1$.
If $(G, d)$ is a CAT($\kappa$) space, then $T$ is pointwise asymptotically 
$\alpha$-firmly nonexpansive at all $y\in\Omega$ with constant 
$\overline{\alpha}_\circ= 2/3$.   
\item\label{t:combproxfg iii0} Let $\mymap{T_0}{G}{G}$ be 
pointwise asymptotically $\alpha$-firmly nonexpansive with constant $\alphabar_0$ at all 
$y\in \Omega\equiv \argmin f\cap\Fix T_0$.  The operator 
$T\coloneqq \prox_{f,\lambda}^p \circ T_0$ 
($\lambda>0$) is pointwise almost 
$\alpha$-firmly nonexpansive
at all points $y\in \Omega$ on $G$ with constant and violation
 \begin{equation}\label{eq:alpha_0-c}
	 \alpha=\tfrac{\alphabar_{0}+\alpha_{c}-2\alphabar_{0}\alpha_{c}}%
	 {\frac{c}{2}\paren{1-\alphabar_{0}-\alpha_{c}+\alphabar_{0}\alpha_{c}}+
		 \alphabar_{0}+\alpha_{c}-2\alphabar_{0}\alpha_{c}}
	 \quad \mbox{and } \epsilon = \epsilon_0+\epsilon_c+\epsilon_0\epsilon_c
 \end{equation}
 where $\epsilon_0$ is the violation of $\alpha$-firm nonexpansiveness of $T_0$ on some neighborhood
 small enough. 
If $(G, d)$ is a CAT($\kappa$) space, then $T$ is pointwise asymptotically 
$\alpha$-firmly nonexpansive at all $y\in\Omega$ with constant 
\begin{equation}\label{eq:alphabar_0c}
	 \alphabar\equiv\tfrac{1}{2-\alphabar_0}.
\end{equation}

\item\label{t:combproxfg iii} The Krasnoselski-Mann relaxation $T\equiv \beta\prox_{f,\lambda}^p\oplus (1-\beta)\Id$ 
is pointwise almost $\alpha$-firmly nonexpansive
at all points $y\in \Omega\equiv \argmin f$ on $G$ with constant 
\[
\alpha_\beta = \frac{\alpha_c\beta^{p-1}}{\alpha_c\beta^{p-1} - \alpha_c\beta+1} 
\]
and violation 
$\epsilon_\beta=\beta\epsilon_c$ where $\alpha_c$ and $\epsilon_c$ are given by \eqref{e:alpha-epsilon_c}.
If $(G, d)$ is a CAT($\kappa$) space, then $T$ is pointwise asymptotically 
$\alpha$-firmly nonexpansive at all $y\in\Omega$ with constant 
\[
\overline{\alpha}_\beta\equiv \tfrac{\beta^{p-1}}{\beta^{p-1}-\beta+2}.
\]

\item\label{t:combproxfg iv} The composition 
$T\equiv \prox_{f,\lambda_2}^p\circ\paren{\beta\prox_{g,\lambda_1}^p\oplus (1-\beta)\Id}$ 
is pointwise almost $\alpha$-firmly nonexpansive
at all points $y\in \Omega\equiv \argmin f\cap \argmin g$ on $G$ with constant
 \begin{equation}\label{eq:alphabar2}
	 \alphahat=\tfrac{\alpha_{\beta}+\alpha_{c}-2\alpha_{\beta}\alpha_{c}}%
	 {\frac{c}{2}\paren{1-\alpha_{\beta}-\alpha_{c}+\alpha_{\beta}\alpha_{c}}+
		 \alpha_{\beta}+\alpha_{c}-2\alpha_{\beta}\alpha_{c}}
	 	 \quad \mbox{ and violation }\epsilonhat = (1+\beta)\epsilon_c + \beta\epsilon_c^2,
 \end{equation}
 where $\alpha_\beta$, $\alpha_c$, and $\epsilon_c$ are the constants in 
part \eqref{t:combproxfg iii} and Theorem \ref{t:p-prox properties}\eqref{t:prox paafne} respectively. 
If $(G, d)$ is a CAT($\kappa$) space, then $T$ is pointwise asymptotically 
$\alpha$-firmly nonexpansive at all $y\in\Omega$ with constant 
\[
\alphahat= \tfrac{1}{2 - \overline{\alpha}_\beta}.   
\]
where $\overline{\alpha}_\beta$ is the constant in part \eqref{t:combproxfg iii},

\item\label{t:combproxfg v}  If $(G, d)$ is symmetric perpendicular,  
the projected gradient operator 
\[
T\equiv P_C\circ\paren{\beta\prox_{g,\lambda}^p\oplus (1-\beta)\Id}                                 
\]
is pointwise almost $\alpha$-firmly nonexpansive
at all points $y\in \Omega\equiv C\cap\argmin g$ on $G$ with 
 \begin{equation}\label{eq:alphabar PG}
	 \alpha_{PG}=\tfrac{1}%
	 {\frac{c}{2}\paren{1-\alpha_{\beta}}+ 1}
	 \quad \mbox{ and violation }\epsilon_{PG} = \epsilon_c\beta.
 \end{equation}
If $(G, d)$ is a CAT($\kappa$) space, then $T$ is pointwise asymptotically 
$\alpha$-firmly nonexpansive at all $y\in\Omega$ with constant 
\[
\alpha_{PG}= \tfrac{1}{2 - \overline{\alpha}_\beta}. 
\]
where $\overline{\alpha}_\beta$ is the constant in part \eqref{t:combproxfg iii}, 
\end{enumerate}
\end{proposition}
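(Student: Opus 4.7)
The plan is to assemble each of the five operators from the building blocks already in hand: a single prox mapping is pointwise almost $\alpha$-firmly nonexpansive at its minimizers with constants $(\alpha_c,\epsilon_c)$ by Theorem \ref{t:p-prox properties}\eqref{t:prox paafne} and asymptotically attains $\alpha=1/2$ in the CAT$(\kappa)$ setting by Theorem \ref{t:p-prox properties}\eqref{t:prox pasafne}; compositions propagate these constants through Proposition \ref{t:compositionthm}; and Krasnoselsky--Mann relaxations transform them through Proposition \ref{t:KM}. Parts \eqref{t:combproxfg ii}--\eqref{t:combproxfg iv} are then a matter of plugging the right inputs into these three results and simplifying, while Part \eqref{t:combproxfg v} additionally requires a sharper firm-nonexpansiveness property of the metric projection onto a closed convex set in symmetric perpendicular spaces.

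For Part \eqref{t:combproxfg ii}, apply Proposition \ref{t:compositionthm} with $T_0=\prox^p_{g,\lambda_1}$ and $T_1=\prox^p_{f,\lambda_2}$, both contributing $(\alpha_c,\epsilon_c)$. Substituting $\alpha_0=\alpha_1=\alpha_c=c(c-1)/(c(c-1)+2)$ into \eqref{eq:alphabar} collapses algebraically to $\alpha_\circ=2(c-1)/(2c-1)$, while the composition violation $\epsilon_0+\epsilon_1+\epsilon_0\epsilon_1=(1+\epsilon_c)^2-1$ simplifies to $1/(c-1)^2-1$. Part \eqref{t:combproxfg iii0} is the same scheme with $T_0$ an abstract pointwise asymptotic $\alpha$-firm mapping, so \eqref{eq:alpha_0-c} is just \eqref{eq:alphabar} with $\alpha_1=\alpha_c$; asymptotic $\alpha$-firmness of $T_0$ supplies, for any $\epsilon_0>0$, a neighborhood on which Proposition \ref{t:compositionthm} applies with $T_0$ having violation $\epsilon_0$. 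Part \eqref{t:combproxfg iii} is Proposition \ref{t:KM} applied directly to $\prox^p_{f,\lambda}$. Part \eqref{t:combproxfg iv} first uses Part \eqref{t:combproxfg iii} to identify the KM relaxation's constants $(\alpha_\beta,\epsilon_\beta=\beta\epsilon_c)$ and then feeds them together with $(\alpha_c,\epsilon_c)$ into Part \eqref{t:combproxfg iii0}. For each CAT$(\kappa)$ asymptotic statement we combine Theorem \ref{t:p-prox properties}\eqref{t:prox pasafne} (giving $\alpha_c\nearrow 1/2$ and $\epsilon_c\searrow 0$ on sufficiently small balls) with Lemma \ref{t:CATkappa-pucvx} ($c\nearrow 2$); in this limit \eqref{eq:alphabar} collapses to $\alphabar=1/(2-\alphabar_0)$ whenever $\alpha_1=1/2$, which yields \eqref{eq:alphabar_0c} and all subsequent asymptotic constants.

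Part \eqref{t:combproxfg v} is the step that really needs work. Treating $P_C=\prox^p_{\iota_C,\lambda}$ as a generic prox mapping would force the composition to inherit violation of order $(1+\beta)\epsilon_c$ as in Part \eqref{t:combproxfg iv}, not the sharper $\epsilon_{PG}=\beta\epsilon_c$ claimed here, and would not give the displayed $\alpha_{PG}$ either. The decisive observation is that $\iota_C$ is identically zero along every geodesic in $C$, so the contradiction argument in the proof of Theorem \ref{t:p-prox properties}\eqref{t:quasi_ne_symetric} in fact shows $P_{[x,P_Cx]}(\xbar)=P_Cx$ for every $\xbar\in C$. Applying \eqref{e:p-ucvx} along the geodesic from $x$ to $P_Cx$ with $z=\xbar$ then gives
\begin{equation*}
 d(\xbar,P_Cx)^p \;\leq\; d\bigl(\xbar,(1-t)x\oplus t\, P_Cx\bigr)^p \;\leq\; (1-t)d(\xbar,x)^p + t\,d(\xbar,P_Cx)^p - \tfrac{c}{2}t(1-t)d(x,P_Cx)^p;
\end{equation*}
subtracting $t\,d(\xbar,P_Cx)^p$, dividing by $(1-t)\in(0,1)$ and passing to $t\nearrow 1$ yields the sharp bound
\begin{equation*}
 d(P_Cx,\xbar)^p \;\leq\; d(x,\xbar)^p - \tfrac{c}{2}\,d(x,P_Cx)^p \qquad \forall\,\xbar\in C,
\end{equation*}
so $P_C$ is pointwise $\alpha$-firmly nonexpansive at every $\xbar\in C$ with constant exactly $\alpha_1=1/2$ and \emph{zero} violation. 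Proposition \ref{t:compositionthm} with $(\alpha_0,\epsilon_0)=(\alpha_\beta,\beta\epsilon_c)$ and $(\alpha_1,\epsilon_1)=(1/2,0)$ then collapses \eqref{eq:alphabar} to $\alpha_{PG}=1/\bigl(\tfrac{c}{2}(1-\alpha_\beta)+1\bigr)$, the composition violation reduces to $\epsilon_\beta+0+\epsilon_\beta\cdot 0=\beta\epsilon_c$, and the CAT$(\kappa)$ asymptotic constant $\alpha_{PG}=1/(2-\alphabar_\beta)$ drops out on sending $c\to 2$. The main obstacle in the whole argument is precisely this improved estimate for $P_C$: pinpointing where the symmetric perpendicular hypothesis and the geodesic-constancy of $\iota_C$ combine to remove the spurious $\epsilon_c$ that a generic prox bound would produce.
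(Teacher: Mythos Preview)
Your proposal is correct and follows essentially the same route as the paper: each part is obtained by feeding the prox constants $(\alpha_c,\epsilon_c)$ from Theorem~\ref{t:p-prox properties}\eqref{t:prox paafne} through Proposition~\ref{t:compositionthm} and Proposition~\ref{t:KM}, with the CAT$(\kappa)$ asymptotic statements obtained by letting $c\nearrow 2$ via Lemma~\ref{t:CATkappa-pucvx} and Theorem~\ref{t:p-prox properties}\eqref{t:prox pasafne}. The only difference is in Part~\eqref{t:combproxfg v}: the paper simply cites \cite[Proposition~25]{BLL} for the fact that $P_C$ is pointwise $\alpha$-firmly nonexpansive at points of $C$ with constant $1/2$ and zero violation, whereas you supply a direct argument for this by sharpening the $t=\tfrac12$ step in the proof of Theorem~\ref{t:p-prox properties}\eqref{t:quasi_ne_symetric} to $t\nearrow 1$; your derivation is sound and arguably more self-contained, though it relies on the same symmetric-perpendicular ingredient as the cited result.
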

\begin{proof}
	\eqref{t:combproxfg ii}.  
	By  Theorem \ref{t:p-prox properties}\eqref{t:prox paafne}, the operators $\prox_{f,\lambda}^p$ and 
	$\prox_{g,\lambda}^p$ are  almost 
	quasi $\alpha$-firmly nonexpansive with constants 
$\alpha_c=\frac{c (c-1)}{c (c-1)+2}$ and violation $\epsilon_c=\frac{2-c}{c-1}$. 
The operator $T$ is almost $\alpha$-firmly nonexpansive at all points $y\in \Omega$ on $G$ with 
\begin{align*}
\alpha_\circ= \frac{2 \alpha_c-2 \alpha_c^2}{\frac{c}{2} (1-2\alpha_c+\alpha_c^2)+2\alpha_c-2\alpha_c^2}
\end{align*}
and violations $1+\epsilon_\circ=(1+\epsilon_c)^2$ by Proposition \ref{t:compositionthm} .
A short calculation yields
\begin{align*}
	\alpha_\circ=\frac{2(c-1)}{2c-1}\quad\mbox{ and }\quad \epsilon_\circ=\frac{1}{(c-1)^2}-1.
\end{align*}
Taking the limit as $c\to 2$ from below yields the constant 
$\alphabar_\circ = 2/3$ with limiting violation $\epsilonbar_\circ = 0$.  The same argument 
as Propositon \ref{t:p-prox properties}\eqref{t:prox pasafne} then shows that, when $G, d)$ is a 
CAT$(\kappa)$ space,  the composition of 
two prox mappings is pointwise asymptotically $\alpha$-firmly nonexpansive 
at points in $\Omega$ with constant $2/3$.\\

\eqref{t:combproxfg iii0}.  Theorem \ref{t:p-prox properties}\eqref{t:prox paafne} and Proposition 
\ref{t:compositionthm} yield pointwise almost 
$\alpha$-firm nonexpansiveness of $T$ at $y\in\Omega$ with constant and violation given by \eqref{eq:alpha_0-c}.
 where $\alpha_c$ and $\epsilon_c$ are given by \eqref{e:alpha-epsilon_c},
 $\alphabar_0$ is the asymptotic constant of $\alpha$-firm nonexpansiveness of $T_0$,
and  $\epsilon_0$ is the violation on some neighborhood.
 (By Proposition \ref{t:properties pafne}\eqref{t:properties pafne iv} if $T_0$ is 
pointwise almost $\alpha$-firmly nonexpansive with constant $\alpha_0<\alphabar_0$, then 
$T_0$ is  also pointwise almost $\alpha$-firmly nonexpansive with constant $\alphabar_0$.) 
By  Theorem \ref{t:p-prox properties} \eqref{t:prox pasafne} and the assumption that 
$T_0$ is pointwise asymptotically $\alpha$-firmly nonexpansive at $\Fix T_0$ 
with constant $\alphabar_0$, 
the same argument as Proposition \ref{t:p-prox properties}\eqref{t:prox pasafne} 
establishes that, when $G, d)$ is a CAT$(\kappa)$ space,  $T$ is 
pointwise asymptotically $\alpha$-firmly nonexpansive at points in $\Omega$ 
with constant $\alphabar$ given by 
\eqref{eq:alphabar_0c}.  \\

\eqref{t:combproxfg iii}. The first statement is an immediate application of Proposition \ref{t:KM} and 
Theorem \ref{t:p-prox properties}\eqref{t:prox paafne}.  When $(G, d)$ is a CAT$(\kappa)$ space,
the same argument as Proposition \ref{t:p-prox properties}\eqref{t:prox pasafne} 
establishes that Krasnoselsky-Mann relaxations of prox mappings are  
pointwise asymptotically $\alpha$-firmly nonexpansive with constant $\alphabar_\beta$ as claimed. 
\\

\eqref{t:combproxfg iv}.  This is an application of part \eqref{t:combproxfg iii0} to part \eqref{t:combproxfg iii}.\\

\eqref{t:combproxfg v}.  This is a specialization of part \eqref{t:combproxfg iv} when $f$ is the indicator function of a convex set $C$
and follows from the fact that, on symmetric perpendicular $p$-uniformly convex spaces, 
the projector is pointwise $\alpha$-firmly nonexpansive at all points in $C$ 
with constant $\alpha=1/2$ (no violation) as shown in \cite[Proposition 25]{BLL}. \\
\end{proof}

\begin{remark}
	Part \eqref{t:combproxfg ii} of Proposition  \ref{t:combproxfg 0} coincides with 
	$\alpha = \frac{2}{3}$ and $\epsilon=0$ in the classic setting with $c=2$. 
	In particular the composition $P_A \circ P_B$ of two projections $P_A$ and $P_B$ onto convex sets 
	$A$ and $B$ with $A\cap B \neq \emptyset$ is $\alpha$-firmly nonexpansive at all $y\in A\cap B$ 
	on $G$ with $\alpha=\frac{2}{3}$ and violation $\epsilon=0$. 
	However this result does not apply if the problem is infeasible, i.e. $A \cap B= \emptyset$.
\end{remark}

These properties allow us to prove the following fundamental result. 
\begin{thm}[convergence of proximal algorithms in CAT($\kappa$) spaces]
	\label{t:convergence ppa}
	Let $(G, d)$ be a $CAT(\kappa)$ space with $\kappa >0$ and 
	$f,g\colon G \rightarrow \Rbb\cap \{+\infty\}$ be  
proper, convex and lower semicontinuous with 
$\argmin f\cap \argmin g  \neq \emptyset$. 
Let $T$ denote any of the mappings in Proposition \ref{t:combproxfg 0}. 
If  $T$ satisfies 
   \begin{equation}
 \label{eq:estimate1c}
 d(x,\Fix T\cap D)\leq \mu d(x,Tx),\hspace{0.2cm}\forall x\in D,
 \end{equation}
with constant  $\mu$, then the fixed point sequence  initialized from any starting point close 
enough to $\Fix T\cap D$ is at least linearly convergent 
to a point in $\Fix T\cap D$ with rate 
$\gamma = \sqrt{1+\epsilon - \tfrac{1-\alpha}{\alpha \mu^2}}<1$, where $\alpha$ and 
$\epsilon$ are the respective constant and violation of pointwise $\alpha$-firm nonexpansiveness
of the fixed point mapping $T$ as given in Proposition \ref{t:combproxfg 0}.  The asymptotic rate 
of convergence is $\gammabar = \sqrt{1- \tfrac{1-\alphabar}{\alphabar \mu^2}}<1$, where $\alphabar$
is  the respective constant of pointwise asymptotic $\alpha$-firm nonexpansiveness
of the fixed point mapping $T$.
\end{thm}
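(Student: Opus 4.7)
The plan is to assemble the convergence result from three ingredients already proved in the paper: Proposition \ref{t:combproxfg 0}, which identifies each of the four candidate mappings $T$ as pointwise almost $\alpha$-firmly nonexpansive at $\Fix T\cap D$ with explicit constants $\alpha$ and $\epsilon$ (and pointwise asymptotically $\alpha$-firmly nonexpansive with constant $\alphabar$ in the CAT($\kappa$) setting); Corollary \ref{t:linear msr convergence}(b), which converts linear metric subregularity into $R$-linear convergence of the fixed point iteration with an explicit rate; and Proposition \ref{t:msr sufficient}, which refines the rate to the asymptotic one by exploiting pointwise asymptotic $\alpha$-firmness together with persistence of metric subregularity under restriction.

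First I would fix the underlying geometric setting: since $(G,d)$ is CAT($\kappa$) with $\kappa>0$, Lemma \ref{t:CATkappa-pucvx} provides, around any $\ybar\in\Fix T\cap D$, a ball $\Ball_\delta(\ybar)$ that is $2$-uniformly convex with constant $c_\delta \nearrow 2$ as $\delta\searrow 0$. We therefore work locally with $p=2$. Theorem \ref{t:p-prox properties}\eqref{t:quasi_ne_symetric} guarantees that each $p$-prox mapping is quasi strictly nonexpansive on such balls, hence a self-mapping; compositions, Krasnoselsky-Mann relaxations, and the projection onto a closed convex set inherit this property, so after possibly shrinking $\delta$ the chosen $T$ is a self-mapping on $\Ball_\delta(\Fix T\cap D)\cap D$.

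Next I would invoke Proposition \ref{t:combproxfg 0} to read off, for the specific $T$ at hand, the constant $\alpha\in(0,1)$ and violation $\epsilon$ of pointwise almost $\alpha$-firm nonexpansiveness at points of $\Fix T\cap D$. Combined with the metric subregularity hypothesis \eqref{eq:estimate1c}, Corollary \ref{t:linear msr convergence}(b) applies directly with $p=2$ and $\tau=(1-\alpha)/\alpha$, yielding $R$-linear convergence of $(x^k)_{k\in\Nbb}$ to a point of $\Fix T\cap D$ at the stated rate $\gamma = \sqrt{1+\epsilon-(1-\alpha)/(\alpha\mu^2)}<1$, provided $\mu$ lies in the admissible window specified by \eqref{eq:theta linear}. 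For the asymptotic rate, I would apply Proposition \ref{t:msr sufficient}: because each of the mappings in Proposition \ref{t:combproxfg 0} is pointwise asymptotically $\alpha$-firmly nonexpansive at points of $\Fix T\cap D$ with constant $\alphabar$, the violation can be driven to zero by shrinking the neighborhood, while Proposition \ref{t:persistence of msr} preserves the metric subregularity estimate on those smaller neighborhoods. Sending $\epsilon\to 0$ in the formula for $\gamma$ then produces the asymptotic rate $\gammabar=\sqrt{1-(1-\alphabar)/(\alphabar\mu^2)}<1$.

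The main obstacle will be bookkeeping the interaction between the violation $\epsilon$, the admissible range of $\mu$, and the size of the neighborhood on which one can simultaneously guarantee that $T$ is a self-mapping and that the ambient ball is $2$-uniformly convex with constant $c$ close enough to $2$. Strictly speaking, $\gamma<1$ requires $\mu<\sqrt{(1-\alpha)/(\alpha\epsilon)}$, so for a given $\mu$ one must first shrink $\delta$ until the locally attained $\epsilon=\epsilon(c_\delta)$ is small enough (and $\alpha=\alpha(c_\delta)$ close enough to $\alphabar$) to place the estimate within the admissible window of Corollary \ref{t:linear msr convergence}(b). Once this hierarchy of nested neighborhoods is arranged, both the finite-$\epsilon$ and the asymptotic convergence statements follow from the cited results without further calculation.
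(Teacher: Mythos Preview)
Your overall strategy matches the paper's: establish pointwise (asymptotic) almost $\alpha$-firm nonexpansiveness via Proposition \ref{t:combproxfg 0}, then feed this together with the linear metric subregularity hypothesis into Proposition \ref{t:msr sufficient} (the paper in fact appeals only to that proposition, not separately to Corollary \ref{t:linear msr convergence}(b), but your two-step version is fine). There is, however, one concrete gap.

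Proposition \ref{t:combproxfg 0} establishes pointwise almost $\alpha$-firm nonexpansiveness of $T$ only at points $y\in\Omega$, where $\Omega$ is the relevant intersection of minimizers (e.g.\ $\argmin f\cap\argmin g$, or $C\cap\argmin g$). The convergence results you invoke (Theorem \ref{t:msr convergence}, Corollary \ref{t:linear msr convergence}(b), Proposition \ref{t:msr sufficient}) require the property at \emph{every} $y\in S=\Fix T\cap D$. You tacitly assume $\Fix T\cap D=\Omega$ when you write that Proposition \ref{t:combproxfg 0} gives $\alpha$-firm nonexpansiveness ``at points of $\Fix T\cap D$'', but that proposition says no such thing, and the inclusion $\Omega\subset\Fix T$ is trivial while the reverse one is not. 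This is exactly the step the paper supplies and you omit: CAT($\kappa$) spaces of small diameter are symmetric perpendicular (Remark \ref{r:remsymmetric}), so each prox mapping is quasi strictly nonexpansive by Theorem \ref{t:p-prox properties}\eqref{t:quasi_ne_symetric}, and then Lemma \ref{t:intersections} forces $\Fix T=\Omega$. You do cite quasi strict nonexpansiveness, but only to argue that $T$ is a self-mapping on small balls; you must use it a second time, through Lemma \ref{t:intersections}, to identify the fixed point set. Without this, the hypothesis ``pointwise almost $\alpha$-firmly nonexpansive at all $y\in\Fix T\cap D$'' remains unverified and neither Corollary \ref{t:linear msr convergence}(b) nor Proposition \ref{t:msr sufficient} applies.
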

\begin{proof}
As established in Theorem \ref{t:p-prox properties} and Proposition \ref{t:combproxfg 0}, 
all of the mappings covered in those results are pointwise asymptotically $\alpha$-firmly 
nonexpansive at points in $\Omega$ with constants $\alphabar<1$ where 
$\Omega$ is one of the following subsets corresponding to the respective mappings (i)-(v)
 in \ref{t:combproxfg 0}: (i) $\Omega = \argmin f\cap \argmin g$; 
 (ii) $\Omega \subset \Fix T_0\cap \argmin f$; 
(iii) $\Omega = \argmin f$; (iv) $\Omega = \argmin f\cap\argmin g$; (v) $\Omega = C\cap\argmin g$.

As noted in Remark \ref{r:remsymmetric}, any CAT($\kappa$) space is symmetric perpendicular
locally, so by Theorem \ref{t:p-prox properties}\eqref{t:quasi_ne_symetric} and  
Lemma \ref{t:intersections}, in every case $\Fix T = \Omega$.  
 Now, if   $T$ satisfies \eqref{eq:estimate1c} at all 
points in $\Fix T$ on 
$G$ with constant $ \mu$, 
it follows immediately from Proposition \ref{t:msr sufficient} that 
the fixed point iteration  
converges linearly to a point in $\Fix T\cap D$  with the given rate
for all starting points close enough to $\Fix T\cap D$,  as claimed.  
\end{proof}

\section{Proximal Splitting Methods}
The concrete examples provided here have been well studied for $p$-uniformly convex spaces 
with $p=c=2$, i.e. CAT(0) spaces.  The tools established in the 
previous sections open the door to applying these methods in CAT$(\kappa)$ spaces, which is new. 
Since CAT$(\kappa)$ spaces are $p$-uniformly convex with $p=2$, to avoid confusion, we
revert to the usual notation for proximal operators in the setting, namely $\prox_{f,\lambda}$, 
omitting the exponent. 

Let $(G,d)$ be a CAT$(\kappa$) space,  $f_i:G\to G$ be proper lsc 
convex functions for $i=1,2,\dots N$.  Consider the problem
\begin{equation}
 \label{eq:sum of cvx}
 \inf_{x\in G}\sum_{i=1}^N f_i(x).
 \end{equation}
Applying {\em backward-backward splitting} to this problem yields 
 Algorithm \ref{alg:bbs}.
\begin{algorithm}[h]    
\SetKwInOut{Input}{Parameters}\SetKwInOut{Output}{Initialization}
  \Input{Functions $f_1\ldots,f_N$ and $\lambda_i >0$ $(i=1,2,\dots,N)$.}
  \Output{Choose  $x_0\in G$.}
  \For{$k = 0,1, 2, \ldots $}{
    \begin{align*}
		x_{k+1}=Tx_k\equiv 
		\paren{\prox_{f_N, \lambda_N}\circ\cdots\circ \prox_{f_2, \lambda_2}\circ \prox_{f_1, \lambda_1}}(x_k)
    \end{align*}
}
\caption{
Proximal splitting}\label{alg:bbs}
\end{algorithm}
Local linear convergence follows from Theorem \ref{t:convergence ppa} and the extension of 
Proposition \ref{t:combproxfg 0}\eqref{t:combproxfg ii} via part \eqref{t:combproxfg iii0}
of the same proposition and induction,  
under the assumption 
that $\Tcal_\Omega$ defined by \eqref{eq:Tcal} -- which  simplifies to \eqref{eq:Tcal_Fix_T} -- 
is linearly metrically subregular for $0$ on $G$ with 
constant $\mu$ and $\Omega\equiv \cap_{j}\argmin f_j\neq\emptyset$.  
By Proposition  \ref{t:properties pafne}\eqref{t:properties pafne ii}, linear metric subregularity 
simplifies to 
 \begin{equation}\label{eq:simple msr}
  d(x, \Omega)\leq \mu d(Tx, x)\quad \forall x\in D. 
\end{equation}

Recall, 
in a $p$-uniformly convex space with $p=2$, the Moreau-Yosida envelope of $f$ is defined by 
\[
e_{f, \lambda}(x)\equiv \inf_{y\in G}\paren{f(y)+\tfrac{1}{2\lambda}d(x,y)^2}. 
\]
The analogue to the direction of steepest descent for the Moreau-Yosida envelope in $p$-uniformly convex 
settings is
\begin{equation}\label{e:sd Me}
	(1-\beta)x\oplus \beta \prox_{f,\lambda}(x).
\end{equation}
Specializing problem \eqref{eq:sum of cvx} to the case $N=2$ and 
$f_2=\iota_C$, the indicator function of some closed convex set $C\subset G$ yields 
Algorithm \ref{alg:pg},  the analog to projected gradients in  CAT($\kappa$) space, 
which is the projected resolvent/projected prox iteration. 
\begin{algorithm}[h]    
\SetKwInOut{Input}{Parameters}\SetKwInOut{Output}{Initialization}
\Input{$\mymap{f}{G}{\Rbb}$, the closed set $C\subset G$, $\lambda>0$ and $\beta\in(0,1)$.}
  \Output{Choose  $x_0\in G$.}
  \For{$k = 0,1, 2, \ldots $}{
    \begin{align*}
		& x_{k+1}=T_{PG}(x_k)\equiv P_{C}\paren{(1-\beta)\Id\oplus \beta \prox_{f,\lambda}}(x_{k})
    \end{align*}
}
\caption{Metric Projected Gradients}\label{alg:pg}
\end{algorithm}
Local linear convergence follows immediately from Theorem \ref{t:convergence ppa} 
under the assumption 
that $T$ satisfies \eqref{eq:estimate1-lin}
 and $ \Omega\equiv \argmin f\cap C\neq\emptyset$.  

Compositions of projectors in CAT$(\kappa)$ spaces has been studied in 
\cite{BLL} and  \cite{RuiLopNic15}.  We consider 
Algorithm \ref{alg:bbs} when the functions $f_i\equiv \iota_{C_i}$, the indicator functions 
of closed convex sets $C_i\subset G$, where $(G, d)$ is a 
complete, symmetric perpendicular $p$-uniformly convex space with constant $c$.  
The $p$-proximal mapping of the indicator  function is the metric projector and so by 
\cite[Proposition 25]{BLL} these are pointwise $\alpha$-firmly nonexpansive at all 
points in $\cap_i C_i$ (assuming, of course, that this is nonempty) and   
by \cite[Lemma 10]{BLL} the cyclic projections mapping
\begin{equation}\label{e:cp mapping}
	T_{CP}\equiv P_{C_N}\cdot P_{C_2} P_{C_1}
\end{equation}
is pointwise $\alpha$-firmly nonexpansive at all 
points in $\cap_i C_i=\Fix T_{CP}$, when the intersection is nonempty, 
with constant $\alphabar_N = \frac{N-1}{N}$
on $G$.  $\Delta$- or weak convergence (no rate) to a point in  $\cap_i C_i$ 
follows from \cite[Theorem 27]{BLL}, with 
strong convergence whenever one of the sets is compact. 
If in addition 
$	 d(x,\cap_i C_i)\leq \mu d(T_{CP}x, x)$
for all  $x\in G$  where $\mu>0$ is the rate of linear metric subregularity, 
 then, by Theorem \ref{t:convergence ppa} below, the sequence $(x_k)$ initialized
 anywhere in $G$ converges 
 linearly to some  $x^{*}\in\cap_i C_i$.

\section{Open Problems}
There are two obvious next steps for this work.  First and foremost is to determine the 
requirements for quantitative convergence of proximal splitting methods for the 
case when the individual prox mappings do not have common fixed points -- the so-called
{\em inconsistent case} -- since it is too limiting to require that the fixed points of the 
constituent elements of splitting methods coincide.  The second item to explore is qualitative settings in which 
metric subregularity comes ``for free''.  In linear settings, polyhedrality and isolated
fixed points suffice to guarantee metric subregularity \cite[Propositions  3I.1 and 3I.2]{DontchevRockafellar14} and this was 
successfully used to prove local linear convergence of the ADMM/Douglas-Rachford algorithms in a convex setting 
\cite[Theorem 2.7]{ACL16}.  In more general settings, 
the Kurdyka-{\L}ojasiewicz (KL) property -- which is equivalent to metric subregularity \cite[Corollary 4 and Remark 5]{BolDan2010}
-- is  satisfied by {\em semi-algebraic} functions \cite{BolteDanilidisLewis2006}.
 Analogues to these properties 
for $p$-uniformly convex spaces would be very useful.  

\section{Declarations}
\subsection{Availability of data and materials} Not applicable
\subsection{Competing interests} The authors declare that they have no competing interests
\subsection{Funding} FL was  supported in part by 
the Deutsche Forschungsgemeinschaft (DFG, German Research Foundation) – 
Project-ID LU 1702/1-1. DRL was supported in part by 
Deutsche Forschungsgemeinschaft (DFG, German Research Foundation) – 
Project-ID LU 1702/1-1 and in part by Deutsche Forschungsgemeinschaft (DFG, German Research Foundation) – 
Project-ID 432680300 – SFB 1456.
\subsection{Authors' contributions}  The authors contributed equally to all results. 
All authors read and approved the final manuscript.
\subsection{Acknowledgements} Not applicable. 


\end{document}